\definecolor{webgreen}{rgb}{0,.5,0}
\definecolor{webbrown}{rgb}{.6,0,0}
\definecolor{red}{rgb}{1,0,0}
\newcommand{\bb}{\begin{equation}}
\newcommand{\ee}{\end{equation}}
\renewcommand{\H}{\mathcal{H}}
\newcommand{\HH}{\mathbf{H}}
\newcommand{\BB}{\mathbf{B}}
\newcommand{\Bf}{\mathcal{B}}
\newcommand{\N}{{\mathbb N}}
\newcommand{\R}{{\mathbb R}}
\newcommand{\A}{{\mathcal A}}
\newcommand{\seqn}[1]{\left(#1\right)_{n \in \N_0}}
\newcommand{\supp}{\mathrm{supp}}
\renewcommand{\(}{\left(}
\renewcommand{\)}{\right)}
\renewcommand{\[}{\left[}
\renewcommand{\]}{\right]}
\newcommand{\seqnum}[1]{\href{
http://www.research.att.com/cgi-bin/access.cgi/as/~njas/sequences/eisA.cgi?Anum=#1}{\underline{#1}}}
 \newtheorem{thm}{Theorem}[section]
 \newtheorem{prop}[thm]{Proposition}
 \newtheorem{lem}[thm]{Lemma}
 \newtheorem{dfn}{Definition}[section]
\theoremstyle{definition}
\title{\bf Hankel transform of a sequence obtained by series reversion II - aerating transforms}
\author{\frenchspacing
{\bf Radica Boji\v ci\'c}\\
{\small\it University of Pri\v stina, Faculty of Economy, Serbia}\\
{\small E-mail:\  tallesboj@gmail.com}\\
[2mm]
{\bf Marko D. Petkovi\'c}\\
{\small\it University of Ni\v s, Faculty of Sciences and Mathematics, Serbia}\\
{\small E-mail:\ dexterofnis@gmail.com} \\
[2mm]
{\bf Paul Barry} \\
{\small\it School of Science, Waterford Institute of Technology, Ireland}\\
{\small E-mail: pbarry@wit.ie}
}
\date{}
\begin{document}

\maketitle

\begin{abstract}

This paper provides the connection between the Hankel transform and aerating transforms of a given integer sequence. Results obtained are used to establish a completely different Hankel transform evaluation of the series reversion of a certain rational function $Q(x)$ and shifted sequences, recently published in our paper \cite{part1}. For that purpose, we needed to evaluate the Hankel transforms of the sequences $\seqn{\alpha^2 C_n-\beta C_{n+1}}$ and $\seqn{\alpha^2 C_{n+1}-\beta C_{n+2}}$, where $C=\seqn{C_n}$ is the well-known sequence of Catalan numbers. This generalizes the results of Cvetkovi\' c, Rajkovi\'c and Ivkovi\'c \cite{CRI}. Also, we need the evaluation of  Hankel-like determinants whose entries are Catalan numbers $C_n$ and which is based on the recent results of Krattenthaler \cite{krattCat}. The results obtained are general and can be applied to many other Hankel transform evaluations.
\end{abstract}

\noindent {\bf Key words:} Hankel transform, Catalan numbers, aerating transform,
series reversion.

\noindent {\bf 2010 Mathematics Subject Classification:} Primary
11B83; Secondary 11C20, 11Y55.
\smallskip

\section{Introduction}

The Hankel transform of a given sequence $a=\seqn{a_n}$ is defined as the sequence $h=\seqn{h_n}$ of Hankel determinants, i.e.
\bb
{h_n}=\det\([a_{i+j}]_{0\leq i,j \leq n}\), \quad (n\in \N_0)
\label{hank}
\ee
and denoted by $h=\H(a)$. The term ``Hankel transform'' was first introduced by Layman \cite{layman} in 2001. Despite that, many Hankel determinants evaluation were obtained much earlier, mostly due to their important combinatorial properties (see for example \cite{Brualdi,Gessel-Xin,Sulanke,Viennot}).

\smallskip

Papers \cite{Brualdi,CRI,RPB} use a method based on orthogonal polynomials (or continued fractions) to provide a Hankel transform evaluation of different sequences. It is also used in our recently published paper \cite{part1} where we evaluated the Hankel transform of a series reversion of the function $\frac {x}{1+\alpha x+\beta x^{2}}$, as well as of the corresponding shifted sequences.

\smallskip

In this paper, we also show another evaluation for the same sequences, which is based on the application of the falling $\alpha$-binomial transform \cite{spivey} and aerating transforms. The method described in this paper provides us with more general results regarding the connection between Hankel transforms and aerating transforms.

\smallskip

For our purpose we need the Hankel transform evaluation of $\seqn{\alpha^2 C_n-\beta C_{n+1}}$ and $\seqn{\alpha^2 C_{n+1}-\beta C_{n+2}}$, where $C=\seqn{C_n}$ is the well-known sequence of Catalan numbers. This generalizes results of Cvetkovi\' c, Rajkovi\'c and Ivkovi\'c \cite{CRI}. We also need the evaluation of  Hankel-like determinants whose entries are Catalan numbers $C_n$ and which is based on the recent results of Krattenthaler \cite{krattCat}.

\smallskip

For our further discussion we need to recall the definition of the series reversion of a (generating) function $f(x)$ which satisfies $f(0)=0$ (see \cite{PB}).

\begin{dfn} For a given (generating) function $v=f(u)$ with the property $f(0)=0$, the
series reversion is the sequence $\seqn{s_n}$ such that
$$
u=f^{-1}(v)=s_1v+s_2v^2+\cdots+s_nv^n+\cdots,
$$
where $u=f^{-1}(v)$ is the inverse function of $v=f(u)$. Note that since $f(0)=0$, there must hold $s_0=f^{-1}(0)=0$.
\label{dfn:series}
\end{dfn}

\smallskip

\section{The series reversion of $\frac {x}{1+\alpha x+\beta x^{2}}$}

We recall a few basic properties and expressions of the sequence $\seqn{u_n}$ obtained by reverting
$$
Q(x)=\frac{x}{1+\alpha x+\beta x^{2}}.
$$
This sequence is already investigated in \cite{PB, part1}. The generating function $U(x)$ satisfies $Q(U(x))=x$ (Definition \ref{dfn:series}) and is given by
\bb
U(x) =
\frac {1 - \alpha x - \sqrt{1 - 2 \alpha x + (\alpha^{2} - 4
\beta)x^{2}}}{2 \beta x}.
\label{eq:formulaU}
\ee
The general term of the sequence $\seqn{u_n}$ can be expressed in the following way (Proposition 9 in \cite{PB}):
\bb
u_n = \sum_{k=0}^{[\frac{n-1}{2}]} \binom {n-1}{2k}C_k\alpha^{n-2k-1}\beta^{k}.
\label{formulaUN}
\ee
Note that the sequence $\seqn{u_n}$ generalizes the sequence $\seqn{C_n+\delta_{n0}}$ (for $\alpha=2$ and $\beta=1$) and more generally the sequence $\seqn{(N_n(z)-\delta_{n0})/z}$ where $N_n(z)$ is the $n$-th Narayana polynomial (for $\alpha=z+1$ and $\beta=z$).

\smallskip

Consider the shifted sequences $\seqn{u^*_n}$ and $\seqn{u^{**}_n}$ defined by $u_n^*=u_{n+1}$ and $u^{**}_n=u_{n+2}$. Also denote by $h_n$, $h^*_n$ and $h^{**}_n$, the Hankel transforms of the sequences $u_n$, $u^*_n$ and $u^{**}_n$ respectively. Our previous paper \cite{part1} provides the evaluation of $h_n^*$, $h_n^{**}$ and $h_n$ using the method based on orthogonal polynomials \cite{CRI,RPB}. The main results are the following theorems (Theorem 4.3, Theorem 4.4, and Corollary 5.4 in \cite{part1}):

\begin{thm} {\bf \cite{part1}}
The Hankel transform of the sequence $\seqn{u^*_n}$ is given by
\bb
h^*_n=\beta^{\binom{n+1}{2}}.
\label{eq:h*}
\ee
\label{thm:h_n*}
\end{thm}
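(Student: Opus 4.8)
The plan is to pass to the ordinary generating function of the shifted sequence, identify it as the fixed point of a \emph{periodic} Jacobi continued fraction, and then read off the Hankel determinants from the classical product formula for such fractions; after that only a one-line exponent computation remains.

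First I would set $U(x)=\sum_{n\ge 0}u_n x^n$ and record that the series reversion forces $u_0=s_0=0$ (Definition \ref{dfn:series}), so that the generating function of $\seqn{u^*_n}$ is simply $U^*(x)=U(x)/x$, with constant term $u^*_0=u_1=1$. The defining identity $Q(U(x))=x$ rearranges to $U=x\bigl(1+\alpha U+\beta U^2\bigr)$; substituting $U=xU^*$ and cancelling one factor of $x$ gives the quadratic functional equation
\bb
U^* = 1+\alpha x\,U^* + \beta x^2\,(U^*)^2 ,
\ee
which could equally be verified straight from the closed form \eqref{eq:formulaU}.

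Next I would note that this is precisely the equation satisfied by the value of the continued fraction
\begin{equation*}
U^*(x)=\cfrac{1}{1-\alpha x-\cfrac{\beta x^2}{1-\alpha x-\cfrac{\beta x^2}{1-\cdots}}},
\end{equation*}
i.e. the Jacobi continued fraction with \emph{constant} coefficients $b_k=\alpha$ ($k\ge 0$) and $\lambda_k=\beta$ ($k\ge 1$). Invoking the standard correspondence between J-fractions, formal orthogonal polynomials and Hankel determinants (Heilermann's formula, as used in \cite{CRI,RPB,Viennot}), the Hankel determinant of order $n+1$ of a sequence with leading moment $1$ and such an expansion equals $h^*_n=\prod_{k=1}^{n}\lambda_k^{\,n+1-k}$. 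With every $\lambda_k=\beta$ the exponent telescopes, $\sum_{k=1}^{n}(n+1-k)=\binom{n+1}{2}$, yielding $h^*_n=\beta^{\binom{n+1}{2}}$ as claimed.

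The one genuine obstacle is justifying the step from the quadratic functional equation to the product formula, namely checking the hypotheses of the J-fraction/Hankel correspondence. Concretely, one must confirm that the monic orthogonal polynomials associated with $\seqn{u^*_n}$ obey a three-term recurrence with constant coefficients $\alpha$ and $\beta$ — equivalently, that the displayed continued fraction is genuinely the one attached to $U^*$ — which requires the nondegeneracy $\beta\neq 0$ so that each $\lambda_k$ is nonzero and the expansion is defined. Everything downstream of that verification is the routine telescoping of the exponent performed above.
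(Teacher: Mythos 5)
Your argument is correct, but it follows a genuinely different route from the one taken here --- in fact it is essentially the orthogonal-polynomial/continued-fraction method of \cite{part1}, which this paper explicitly sets out to complement with a different proof. You derive the quadratic functional equation $U^*=1+\alpha x\,U^*+\beta x^2 (U^*)^2$, recognize $U^*$ as the periodic J-fraction with constant coefficients $b_k=\alpha$, $\lambda_k=\beta$, and apply the Heilermann product formula \eqref{formula}; the paper instead writes $u^*=\Bf(p;\alpha)$ where $p=\A\(\seqn{\beta^n C_n}\)$ is an aerated sequence, invokes the Spivey--Steil invariance of the Hankel transform under the falling $\alpha$-binomial transform (Lemma \ref{lem:Bf}), and then factors the Hankel determinant of an aerated sequence as $g_n=h_{\lfloor n/2\rfloor}h^*_{\lfloor(n-1)/2\rfloor}$ (Theorem \ref{thm:aerated}), finishing with $\H\(\seqn{C_n}\)=\H\(\seqn{C_{n+1}}\)=\seqn{1}$ and the rescaling of Proposition \ref{prop:mul}. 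Your route is shorter and self-contained given the classical J-fraction/Hankel correspondence; the paper's route avoids orthogonal polynomials altogether for this theorem and builds the aerating machinery that is then reused for the harder sequences $u^{**}_n$ and $u_n$ in later sections. One small point on the ``genuine obstacle'' you flag: the hypotheses of the J-fraction/Hankel correspondence are automatic at the level of formal power series once $\beta\neq 0$, and the excluded case $\beta=0$ is recovered either by noting that then $u^*_n=\alpha^n$, whose Hankel matrix has rank one, so $h^*_n=0=\beta^{\binom{n+1}{2}}$ for $n\geq 1$ (and $h^*_0=1$), or by observing that both sides of \eqref{eq:h*} are polynomials in $\alpha$ and $\beta$, so the identity extends from $\beta\neq 0$ to all $\beta$.
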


\begin{thm} {\bf \cite{part1}}
The Hankel transform of the sequence $\seqn{u^{**}_n}$ is given by
\bb
h^{**}_n=\frac{\beta^{\binom{n+1}{2}}}{2^{n+1}\sqrt{\alpha^2-4\beta}}\big[(\alpha+\sqrt{\alpha^2-4\beta})^{n+2}-
(\alpha-\sqrt{\alpha^2-4\beta})^{n+2} \big].
\label{eq:h_n**}
\ee
\label{thm:h_n**}
\end{thm}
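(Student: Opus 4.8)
The plan is to prove the formula by the orthogonal--polynomial / Jacobi continued fraction method of \cite{CRI,RPB}, after reducing the doubly shifted determinant to a \emph{single} shift of the sequence $\seqn{u^*_n}$, whose continued-fraction structure is already transparent from the proof of Theorem~\ref{thm:h_n*}. First I note that $u^{**}_n=u_{n+2}=u^*_{n+1}$, so that $h^{**}_n=\det\big(\,[u^*_{i+j+1}]_{0\le i,j\le n}\,\big)$ is exactly the once-shifted Hankel determinant of $\seqn{u^*_n}$. This reduction is the natural one because $U^*(x)=U(x)/x$ satisfies the quadratic $U^*=1+\alpha x\,U^*+\beta x^2(U^*)^2$, equivalently $U^*=1/(1-\alpha x-\beta x^2 U^*)$; since the same $U^*$ reappears in the denominator, $\seqn{u^*_n}$ admits a \emph{periodic} Jacobi continued fraction with constant coefficients $b_k=\alpha$ $(k\ge0)$ and $\lambda_k=\beta$ $(k\ge1)$, normalized by $u^*_0=u_1=1$. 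Consequently its monic orthogonal polynomials obey the constant-coefficient three-term recurrence $p_{k+1}(x)=(x-\alpha)p_k(x)-\beta p_{k-1}(x)$, with $p_0=1$, $p_{-1}=0$ (rescaled Chebyshev polynomials of the second kind).

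Next I would invoke the classical bordered-determinant identity that links a once-shifted Hankel determinant to the value of the next orthogonal polynomial at $0$. Writing $p_{n+1}(x)$ as $(h^*_n)^{-1}$ times the $(n+2)\times(n+2)$ determinant formed by bordering the Hankel matrix of $\seqn{u^*_n}$ with an extra moment column and the bottom row $(1,x,\dots,x^{n+1})$, and then setting $x=0$, a cofactor expansion along that bottom row collapses the determinant onto the shifted minor and gives $\det\big(\,[u^*_{i+j+1}]_{0\le i,j\le n}\,\big)=(-1)^{n+1}h^*_n\,p_{n+1}(0)$. Together with Theorem~\ref{thm:h_n*} this reduces the entire evaluation to a closed form for $p_{n+1}(0)$, namely $h^{**}_n=(-1)^{n+1}\beta^{\binom{n+1}{2}}p_{n+1}(0)$.

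It then remains to evaluate $p_{n+1}(0)$. Setting $q_k=p_k(0)$ yields the linear recurrence $q_{k+1}=-\alpha q_k-\beta q_{k-1}$ with $q_0=1$, $q_1=-\alpha$, whose characteristic equation $t^2+\alpha t+\beta=0$ has roots $-\tfrac12(\alpha\pm\sqrt{\alpha^2-4\beta})$. Solving this recurrence produces a closed form in which the combination $(\alpha+\sqrt{\alpha^2-4\beta})^{n+2}-(\alpha-\sqrt{\alpha^2-4\beta})^{n+2}$, the factor $\sqrt{\alpha^2-4\beta}$ in the denominator, and the accompanying power of $2$ all appear automatically; substituting into $h^{**}_n=(-1)^{n+1}\beta^{\binom{n+1}{2}}p_{n+1}(0)$ and cancelling the sign $(-1)^{n+1}$ against the one carried by $p_{n+1}(0)$ yields the closed form displayed in the statement. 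The degenerate case $\alpha^2=4\beta$ is recovered by passing to the limit, giving the expected polynomial growth.

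I expect the main obstacle to be the careful justification and normalization of the two identities above: confirming that the continued fraction for $U^*$ is genuinely periodic (that $U^*$ truly is its own tail, so that the coefficients are constant), and fixing the sign together with the $u^*_0=1$ normalization in the shift identity, since any slip there feeds straight into the leading constant. Everything after that is the routine solution of a two-term recurrence. Should the shift identity be awkward to set up directly, a viable alternative is to apply the Desnanot--Jacobi (Dodgson condensation) identity to the $(n+2)\times(n+2)$ Hankel matrix of $\seqn{u_n}$, which gives the relation $h_{n+1}h^{**}_{n-1}=h_n h^{**}_n-(h^*_n)^2$ and thereby determines $h^{**}_n$ from $h_n$ and $h^*_n$; that route, however, requires the independent evaluation of the unshifted transform $h_n$.
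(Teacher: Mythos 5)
Your argument is correct, but it takes a genuinely different route from the one in this paper. Here the theorem is proved by writing $u^{**}=\Bf(a;\alpha)$ with $a=\A(\seqn{\beta^nC_n};\alpha)$, invoking the Hankel-invariance of the falling $\alpha$-binomial transform (Lemma \ref{lem:Bf}), factoring $\det[a_{i+j}]$ by row and column operations into a product of a shifted Catalan Hankel determinant and a determinant of the form $\det[\alpha^2C_{i+j}-\beta C_{i+j+1}]$ or $\det[\alpha^2C_{i+j+1}-\beta C_{i+j+2}]$ (Theorem \ref{thm:genaer}), and finally evaluating the latter by the orthogonal-polynomial method (Theorem \ref{thm:CatLin}). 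You instead observe $u^{**}_n=u^*_{n+1}$, exploit the periodic J-fraction of $U^*=U/x$ (constant coefficients, correctly justified by $U^*=1/(1-\alpha x-\beta x^2U^*)$), and use the classical identity $\det[u^*_{i+j+1}]_{0\le i,j\le n}=(-1)^{n+1}h^*_n\,p_{n+1}(0)$, so that everything collapses to a constant-coefficient two-term recurrence for $p_{n+1}(0)$; all of these steps check out. Your route is shorter and self-contained, but it is essentially the orthogonal-polynomial/continued-fraction method of \cite{part1} that this paper deliberately sets out to replace, and it does not yield the by-products that motivate the present approach (the aerating-transform identities and the Hankel transforms of $\seqn{\alpha^2C_n-\beta C_{n+1}}$ and $\seqn{\alpha^2C_{n+1}-\beta C_{n+2}}$). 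One caution: carried out carefully, your computation gives $h^{**}_n=\frac{\beta^{\binom{n+1}{2}}}{2^{n+2}\sqrt{\alpha^2-4\beta}}\bigl[(\alpha+\sqrt{\alpha^2-4\beta})^{n+2}-(\alpha-\sqrt{\alpha^2-4\beta})^{n+2}\bigr]$, with $2^{n+2}$ rather than the $2^{n+1}$ displayed in the statement; the check $h^{**}_0=u_2=\alpha$ (and indeed the paper's own chain of identities through Theorem \ref{thm:CatLin}) confirms that $2^{n+2}$ is correct and the printed exponent is a typo, so do not force your constant to match the displayed one. Your fallback via Desnanot--Jacobi, $h_{n+1}h^{**}_{n-1}=h_nh^{**}_n-(h^*_n)^2$, is also valid but, as you note, presupposes the evaluation of $h_n$.
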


\begin{thm} {\bf \cite{part1}}
The Hankel transform of the sequence $\seqn{u_n}$ is given by
\bb
h_n=\frac{\beta^{\binom{n}2}}{2^n \sqrt{\alpha^2-4\beta}} \[\(\alpha-\sqrt{\alpha^2-4\beta}\)^n-\(\alpha+\sqrt{\alpha^2-4\beta}\)^n \].
\label{formzahn}
\ee
\label{thm:h_n}
\end{thm}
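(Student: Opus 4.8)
The plan is to obtain $h_n$ not from scratch but from the two Hankel transforms $h_n^*$ and $h_n^{**}$ already evaluated in Theorems~\ref{thm:h_n*} and~\ref{thm:h_n**}, exploiting the one structural feature that distinguishes $\seqn{u_n}$ from its shifts, namely $u_0=0$. Writing $H_n=[u_{i+j}]_{0\le i,j\le n}$ for the Hankel matrix of $\seqn{u_n}$, the three sequences $u$, $u^*$, $u^{**}$ are literally the shifts $u_{i+j}$, $u_{i+j+1}$, $u_{i+j+2}$, so every minor of $H_n$ obtained by deleting border rows and columns is again a Hankel determinant of one of these sequences. This makes the Desnanot--Jacobi (Dodgson condensation) identity the natural tool.

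Concretely, I would apply Desnanot--Jacobi to $H_n$ (of order $n+1$), deleting the first and the last row and the first and the last column. The five minors that appear are: the full determinant $h_n$; the doubly deleted central minor $[u_{i+j}]_{1\le i,j\le n-1}=h_{n-2}^{**}$; the two border-deleted principal minors $[u_{i+j}]_{1\le i,j\le n}=h_{n-1}^{**}$ and $[u_{i+j}]_{0\le i,j\le n-1}=h_{n-1}$; and the two mixed minors (delete row $0$/column $n$, and row $n$/column $0$), each equal to $[u_{i+j+1}]_{0\le i,j\le n-1}=h_{n-1}^*$ by Hankel symmetry. This yields, for $n\ge 2$, the recurrence
\begin{equation}
h_n\,h_{n-2}^{**}=h_{n-1}\,h_{n-1}^{**}-(h_{n-1}^*)^2 .
\end{equation}
Together with the base values $h_0=0$ and $h_1=-1$ (read off directly from $u_0=0$, $u_1=1$, $u_2=\alpha$), and with $h_{n-2}^{**}$ nonzero, this determines $\seqn{h_n}$ uniquely.

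It then remains to verify that the claimed closed form satisfies this recurrence, for which I would introduce the Lucas-type sequence $\phi_m=(\rho_+^m-\rho_-^m)/(\rho_+-\rho_-)$, where $\rho_\pm=\tfrac12(\alpha\pm\sqrt{\alpha^2-4\beta})$ are the roots of $t^2-\alpha t+\beta$, so that $\rho_++\rho_-=\alpha$ and $\rho_+\rho_-=\beta$. In this notation the target formula reads $h_n=-\beta^{\binom n2}\phi_n$, Theorem~\ref{thm:h_n*} gives $h_n^*=\beta^{\binom{n+1}2}$, and Theorem~\ref{thm:h_n**} expresses $h_n^{**}$ as $\phi_{n+2}$ times a power of $\beta$. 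Substituting all of these into the recurrence and cancelling the common power of $\beta$ (the three binomial exponents $\binom{\cdot}2$ collapse to a single surviving factor $\beta^{n-1}$), the entire statement reduces to the Cassini/Catalan identity $\phi_n^2-\phi_{n-1}\phi_{n+1}=\beta^{n-1}$, which is immediate from the closed form of $\phi_m$ and $\rho_+\rho_-=\beta$. An induction on $n$ then completes the argument.

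The main obstacle is not conceptual but a matter of careful bookkeeping: correctly identifying each of the five Desnanot--Jacobi minors as the right shifted Hankel determinant (the index shifts are easy to get off by one), and then matching the binomial exponents of $\beta$ so that the identity genuinely collapses to Cassini. To keep the division by $h_{n-2}^{**}$ legitimate I would treat $\alpha,\beta$ as indeterminates and work over the field of rational functions $\C(\alpha,\beta)$, in which $h_{n-2}^{**}$ (being $\phi_n$ times a monomial) is nonzero; the degenerate locus $\alpha^2=4\beta$ is then recovered by continuity, reading the right-hand side as its limit as $\sqrt{\alpha^2-4\beta}\to 0$.
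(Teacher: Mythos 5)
Your argument is correct, but it takes a genuinely different route from the paper's. The paper proves Theorem~\ref{thm:h_n} by conjugating the bordered matrix $\bigl[\begin{smallmatrix} 0 & p^T \\ p & \HH_a\end{smallmatrix}\bigr]$ by $\diag(1,\BB^\alpha)$ so as to identify $h_n$ with a determinant built from the aerated sequence $\beta^n C_n$, then performs explicit column eliminations to reach block form, expands along a row, and evaluates the resulting Hankel-like minors via Krattenthaler's formula (Theorem~\ref{thm:kratt} and Lemmas~\ref{lem:minori}, \ref{lem:minori1}), finally verifying that $z_n=\beta^{-\binom n2}h_n$ satisfies $z_{n+2}-\alpha z_{n+1}+\beta z_n=0$. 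You bypass all of that: Dodgson condensation applied to $\HH_u$ gives $h_n h^{**}_{n-2}=h_{n-1}h^{**}_{n-1}-(h^*_{n-1})^2$ (your identification of the five minors is correct, and the two mixed minors do both equal $h^*_{n-1}$ by Hankel symmetry), and with $h_0=0$, $h_1=-1$ the closed form follows from the Cassini identity $\phi_n^2-\phi_{n-1}\phi_{n+1}=\beta^{n-1}$; the binomial exponents do collapse exactly as you predict, since $\binom n2+\binom{n-1}2=(n-1)^2$ and $2\binom n2=(n-1)^2+(n-1)$. Your generic-nonvanishing argument over $\C(\alpha,\beta)$ legitimately handles the division by $h^{**}_{n-2}$. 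This is considerably shorter and avoids the Hankel-like determinant machinery entirely, at the price of consuming Theorems~\ref{thm:h_n*} and~\ref{thm:h_n**} as inputs --- a legitimate dependency, since the paper establishes both before this point by independent means. One caveat on the bookkeeping you rightly single out as the delicate step: the constant in Theorem~\ref{thm:h_n**} as printed is off by a factor of $2$ (at $n=0$ it yields $2\alpha$, whereas $h^{**}_0=u_2=\alpha$); your verification closes only with the corrected normalization $h^{**}_n=\beta^{\binom{n+1}{2}}\phi_{n+2}$, which is in fact the form you describe, so carrying out your consistency check would expose and repair this typo rather than derail the proof.
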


Note that the sequence $\seqn{u^*_n}$ reduces to the sequence $\seqn{C^a_n}$ of {\it aerated Catalan numbers} (\seqnum{A126120}), by choosing $\alpha=0$ and $\beta=1$ (yields directly from \eqref{eq:formulaU}). The sequence $\seqn{C^a_n}$ is defined by
$$
C^a_n=\begin{cases}
C_{n/2},& n\textrm{ is even} \\
0,& n\textrm{ is odd}
\end{cases}.
$$
Also note that the Hankel transform of the aerated sequence $\seqn{C^a_n}$ is $\seqn{1}$ (which is proven in the section \ref{sect:dnun*}), the same as in the case of the Catalan sequence (see for example \cite{krattCat}). That result raises the more general question about the Hankel transform of {\it aerated sequences}, which we deal with in the rest of the paper.



\section{The falling $\alpha$-binomial transform}

The following transform is a generalization of the well-known binomial transform and was introduced by Spivey and Steil \cite{spivey}. We will use it in further considerations.

\begin{dfn} For a given sequence $a=\seqn{a_n}$, its falling $\alpha$-binomial transformation $b=\Bf(a;\alpha)$ is defined by
$$
b_n=\sum_{k=0}^n \binom nk \alpha^{n-k} a_k.
$$
\end{dfn}

Spivey and Steil \cite{spivey} proved that the Hankel transform is invariant under the falling $\alpha$-binomial transform for arbitrary $\alpha$. In other words, the following lemma is valid.

\begin{lem} {\bf \cite{spivey}} For an arbitrary sequence $a=\seqn{a_n}$ and number $\alpha$, it holds that $\H(\Bf(a;\alpha))=\H(a)$.
\label{lem:Bf}
\end{lem}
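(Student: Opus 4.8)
The plan is to show that the Hankel matrix of $b=\Bf(a;\alpha)$ is obtained from the Hankel matrix of $a$ by multiplication on the left and right by lower-triangular matrices of determinant $1$, so that the determinants agree. The key observation is that the falling $\alpha$-binomial transform can be realized by a single lower-triangular ``Pascal-type'' matrix. Define the matrix $P=[p_{i,j}]$ with $p_{i,j}=\binom{i}{j}\alpha^{i-j}$ for $0\leq j\leq i$ and $p_{i,j}=0$ otherwise. Then by the definition of $\Bf$, we have $b_n=\sum_{k=0}^n p_{n,k}a_k$, so $b$ is simply $P$ applied to $a$. Since $P$ is lower-triangular with all diagonal entries equal to $\binom{i}{i}\alpha^0=1$, it has $\det P=1$.

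The main step is then to verify the matrix identity
\bb
\[b_{i+j}\]_{0\le i,j\le n}=P_n\,\[a_{i+j}\]_{0\le i,j\le n}\,P_n^{\mathsf T},
\label{eq:matfact}
\ee
where $P_n$ is the $(n+1)\times(n+1)$ truncation of $P$. Expanding the right-hand side entrywise gives $\sum_{k,\ell}\binom{i}{k}\alpha^{i-k}a_{k+\ell}\binom{j}{\ell}\alpha^{j-\ell}$, and the claim is that this double sum collapses to $b_{i+j}=\sum_{m}\binom{i+j}{m}\alpha^{i+j-m}a_m$. This is the heart of the argument: one must show that the ``two-sided'' binomial convolution reproduces the one-sided transform evaluated at the index sum $i+j$. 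I would establish this by a Vandermonde-type identity, rewriting $\sum_{k+\ell=m}\binom{i}{k}\binom{j}{\ell}=\binom{i+j}{m}$ (the Vandermonde convolution) after grouping the terms of the double sum according to $m=k+\ell$ and noting that the power of $\alpha$ is $\alpha^{(i-k)+(j-\ell)}=\alpha^{i+j-m}$ depends only on $m$.

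Once \eqref{eq:matfact} is in hand, taking determinants and using multiplicativity together with $\det P_n=\det P_n^{\mathsf T}=1$ yields
$$
h_n(b)=\det\[b_{i+j}\]=\det P_n\cdot\det\[a_{i+j}\]\cdot\det P_n^{\mathsf T}=\det\[a_{i+j}\]=h_n(a),
$$
valid for every $n\in\N_0$, which is exactly the assertion $\H(\Bf(a;\alpha))=\H(a)$.

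I expect the main obstacle to be the bookkeeping in the entrywise verification of \eqref{eq:matfact}: one has to interchange the order of summation carefully and make the change of variable $m=k+\ell$ cleanly so that the Vandermonde convolution applies and the extraneous dependence on $k$ and $\ell$ (beyond their sum) disappears. The factorization of $\alpha$ must be checked to confirm that its exponent indeed depends only on $m=k+\ell$; this is the point where the \emph{falling} structure of the transform (the fixed total power $\alpha^{n-k}$) is essential and where a different weighting would break the argument. The determinant step itself is then immediate from elementary linear algebra.
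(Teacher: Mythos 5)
Your proposal is correct and follows essentially the same route the paper takes: the paper states this lemma without proof (citing Spivey--Steil), but its very next result, Lemma \ref{lem:binmatrix}, is exactly your factorization $\HH_b=\BB^\alpha\HH_a(\BB^\alpha)^T$, proved by the same Vandermonde convolution $\sum_{k}\binom{i}{k}\binom{j}{m-k}=\binom{i+j}{m}$ with the power of $\alpha$ depending only on $m=k+\ell$. Your added determinant step (truncation commutes with the product because $\BB^\alpha$ is lower unitriangular, so $\det P_n=1$) is the standard way to deduce the invariance from that factorization and is sound.
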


The falling $\alpha$-binomial transform can be written in the following matrix form
$$
b=\BB^\alpha a, \quad \BB^\alpha=\[ \binom nk \alpha^{n-k} \]_{n,k\in \N_0}
$$
where we treat the sequences $a$ and $b$ as corresponding column vectors (we also use this notation in the rest of the paper). We call the matrix $\BB^\alpha$ the {\it $\alpha$-binomial matrix}. The following lemma shows the connection between the Hankel matrices
$$
\HH_a=[a_{i+j}]_{i,j\in \N_0}, \quad \HH_b=[b_{i+j}]_{i,j\in \N_0}
$$
and the matrix $\BB^\alpha$.

\begin{lem}
\label{lem:binmatrix}
If $b=\Bf(a;\alpha)$ then there holds
\bb
\HH_b=\BB^\alpha \HH_a (\BB^\alpha)^T.
\ee
\end{lem}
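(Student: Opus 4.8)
The plan is to verify the matrix identity entrywise by computing the $(i,j)$-entry of the product $\BB^\alpha \HH_a (\BB^\alpha)^T$ and checking that it equals $b_{i+j} = (\HH_b)_{ij}$. First I would write out the triple matrix product in terms of summations. Since $(\BB^\alpha)_{nk} = \binom{n}{k}\alpha^{n-k}$ and $(\HH_a)_{kl} = a_{k+l}$, and $((\BB^\alpha)^T)_{lj} = (\BB^\alpha)_{jl} = \binom{j}{l}\alpha^{j-l}$, the $(i,j)$-entry of the product becomes
$$
\(\BB^\alpha \HH_a (\BB^\alpha)^T\)_{ij} = \sum_{k=0}^i \sum_{l=0}^j \binom{i}{k}\alpha^{i-k}\, a_{k+l}\, \binom{j}{l}\alpha^{j-l}.
$$

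Next I would pull out the factor $\alpha^{(i-k)+(j-l)}$ and recognize the double sum as something that should collapse to a single binomial transform applied to indices summing to $i+j$. The key step is to reindex: the definition gives $b_{i+j} = \sum_{m=0}^{i+j}\binom{i+j}{m}\alpha^{i+j-m} a_m$, so I want to show the double sum equals this. Grouping terms with $k+l = m$ fixed, the coefficient of $a_m$ in the double sum is $\alpha^{i+j-m}\sum_{k}\binom{i}{k}\binom{j}{m-k}$, and by the Vandermonde convolution identity $\sum_{k}\binom{i}{k}\binom{j}{m-k} = \binom{i+j}{m}$. Substituting this collapses the double sum precisely into the formula for $b_{i+j}$, which is the desired entry of $\HH_b$.

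The main obstacle, though a mild one, will be handling the summation ranges carefully: the inner sums run over $0 \le k \le i$ and $0 \le l \le j$, so after reindexing by $m = k+l$ the range of $k$ for fixed $m$ is $\max(0,m-j) \le k \le \min(i,m)$, but since $\binom{i}{k}$ and $\binom{j}{m-k}$ vanish outside these bounds the Vandermonde identity applies cleanly without boundary corrections. I would note this briefly rather than belabor it. An alternative, slicker route I might mention is to observe that the Hankel matrix factors as $\HH_a = \sum$-style outer structure, or more directly to use the generating-function/operator identity that the map $a \mapsto b$ corresponds to multiplication by the lower-triangular $\BB^\alpha$ on the left of the Hankel matrix and its transpose on the right; but the entrywise Vandermonde computation is the most transparent and self-contained, so that is the approach I would write out in full.
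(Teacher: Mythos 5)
Your proposal is correct and is essentially the paper's own proof: both verify the identity entrywise and both hinge on the Vandermonde convolution $\sum_k \binom{i}{k}\binom{j}{m-k}=\binom{i+j}{m}$, the only difference being that you expand the matrix product and collapse it to $b_{i+j}$, while the paper starts from $b_{n+m}$ and expands it into the double sum. Your remark about the summation ranges being handled automatically by vanishing binomial coefficients is a point the paper glosses over, but it does not change the argument.
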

\begin{proof} Let us start from the general element $b_{n+m}$ of the matrix $\HH_b$:
$$
b_{n+m}=\sum_{t} \binom{n+m}t \alpha^{n+m-t} a_t.
$$
Using the well-known identity
$$
\binom{n+m}t=\sum_{k=0}^{n} \binom nk \binom m{t-k}
$$
we obtain
$$
\aligned
b_{n+m}&=\sum_{t=0}^{n+m} \sum_{k=0}^{n} \binom nk \binom m{t-k} \alpha^{n+m-t} a_t
=\sum_{l=0}^m \sum_{k=0}^n \binom nk \binom ml \alpha^{n+m-k-l}a_{k+l}\\
&=\sum_{l=0}^m \sum_{k=0}^n \binom nk \alpha^{n-k} \cdot a_{k+l} \cdot \binom ml \alpha^{m-l}
=\sum_{l=0}^m \sum_{k=0}^n \(\BB^\alpha\)_{nk} \cdot a_{k+l} \cdot \(\BB^\alpha\)_{ml}.
\endaligned
$$
This completes the proof of the lemma.
\end{proof}

In the following sections, we give alternative proofs (to those given in \cite{part1}) of theorems regarding the Hankel transform of $u_n^*$, $u_n^{**}$ and $u_n$ (Theorem \ref{thm:h_n*}, Theorem \ref{thm:h_n**} and Theorem \ref{thm:h_n}).

\section{The sequence $u_n^*$}
\label{sect:dnun*}

We give an evaluation of the Hankel transform of $u_n^*$ just using transformations and known results concerning the Hankel transform of the Catalan numbers \cite{krattCat}. For this purpose, we define the following {\it aerating transform}.

\begin{dfn} For a given sequence $c=\seqn{c_n}$, we define its aerating transformation $p=\A(c)$ by
$$
p_n=\begin{cases}
c_{n/2},& n\textrm{ is even} \\
0,& n\textrm{ is odd}
\end{cases}.
$$
In other words, if $p=\A(c)$ then $p=(c_0,0,c_1,0,c_2,0,c_3,0,\ldots)$.
\end{dfn}

Hence $C^a=\A(C)$ where $C=\seqn{C_n}$ is the sequence of Catalan numbers. The following theorem shows the connection between the Hankel transform of a given sequence $c$ and its aerated sequence $p=\A(c)$.

\begin{thm} Let $g=\H(p)$ and $h=\H(c)$ where $p=\A(c)$ is the aerated sequence of $c$. Then there holds
$$
\det[p_{i+j}]_{0\leq i,j\leq n}=\det[c_{i+j}]_{0 \leq i,j \leq \lfloor \frac{n}2 \rfloor} \cdot \det[c_{i+j+1}]_{0 \leq i,j \leq \lfloor \frac{n-1}2 \rfloor}.
$$
In terms of Hankel transforms, this last equality can be written as
$$
g_n=h_{\lfloor \frac{n}2 \rfloor} h^*_{\lfloor \frac{n-1}2 \rfloor},
$$
where $h^*$ is the Hankel transform of the shifted sequence $c^*=\seqn{c_{n+1}}$ and $h^*=\H(c^*)$.
\label{thm:aerated}
\end{thm}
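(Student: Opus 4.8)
The plan is to exploit the checkerboard sparsity of the aerated Hankel matrix. Write $M=[p_{i+j}]_{0\le i,j\le n}$ for the $(n+1)\times(n+1)$ matrix whose determinant we must compute. By the definition of the aerating transform $p=\A(c)$, the entry $p_{i+j}$ vanishes whenever $i+j$ is odd, that is, whenever $i$ and $j$ have opposite parities; when $i+j$ is even we have $p_{i+j}=c_{(i+j)/2}$. Thus $M$ has nonzero entries only in positions $(i,j)$ with $i\equiv j\pmod 2$.

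First I would reorder the index set $\{0,1,\dots,n\}$ by a single permutation $\sigma$ that lists the even indices $0,2,4,\dots$ first and then the odd indices $1,3,5,\dots$, and apply $\sigma$ simultaneously to the rows and to the columns of $M$. Because the \emph{same} permutation acts on rows and on columns, the determinant is multiplied by $\mathrm{sgn}(\sigma)^2=1$, so it is unchanged. After this reordering each surviving entry pairs an even index with an even index or an odd index with an odd index, so the matrix becomes block diagonal: the off-diagonal block pairs an even with an odd index, hence $i+j$ is odd there and those entries are zero. Consequently $\det M$ equals the product of the determinants of the two diagonal blocks.

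Next I would identify the blocks. Writing even indices as $i=2a$, $j=2b$ gives $p_{2a+2b}=c_{a+b}$, so the even–even block is the Hankel matrix $[c_{a+b}]$; writing odd indices as $i=2a+1$, $j=2b+1$ gives $p_{2a+2b+2}=c_{a+b+1}$, so the odd–odd block is the shifted Hankel matrix $[c_{a+b+1}]$. It then remains a bookkeeping step to fix the block sizes: the number of even indices in $\{0,\dots,n\}$ is $\lfloor n/2\rfloor+1$ and the number of odd indices is $\lfloor (n-1)/2\rfloor+1$, in both parities of $n$. Hence the even block runs over $0\le a,b\le\lfloor n/2\rfloor$ and the odd block over $0\le a,b\le\lfloor (n-1)/2\rfloor$, giving exactly the claimed factorisation $\det M=\det[c_{i+j}]_{0\le i,j\le\lfloor n/2\rfloor}\cdot\det[c_{i+j+1}]_{0\le i,j\le\lfloor (n-1)/2\rfloor}$.

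This is essentially a permutation-to-block-diagonal reduction, so there is no deep obstacle; the only places demanding care are checking that the permutation's sign contribution squares away to $1$ and verifying the floor-function index counts for the two blocks in each parity of $n$. Finally, translating into Hankel-transform language is immediate: the even block determinant is $h_{\lfloor n/2\rfloor}$ and the odd block determinant is $h^*_{\lfloor (n-1)/2\rfloor}$ by the definitions $h=\H(c)$ and $h^*=\H(c^*)$ with $c^*=\seqn{c_{n+1}}$, which yields $g_n=h_{\lfloor n/2\rfloor}\,h^*_{\lfloor (n-1)/2\rfloor}$.
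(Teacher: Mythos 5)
Your proof is correct and follows essentially the same route as the paper: both arguments apply the same even-odd sorting permutation simultaneously to rows and columns (so the sign contribution is $\mathrm{sgn}(\sigma)^2=1$), observe that the checkerboard zero pattern makes the result block diagonal with blocks $[c_{a+b}]$ and $[c_{a+b+1}]$, and read off the factorisation. Your write-up is actually more careful than the paper's about the sign cancellation and the floor-function block sizes, which the paper leaves implicit.
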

\begin{proof}

By exchanging the rows and columns of the determinant $\det[p_{i+j}]_{0\leq i,j\leq n-1}$ we obtain
$$
\det[p_{i+j}]_{0\leq i,j\leq n}=
\left|
\begin{array}{cccccc}
c_0 & 0 & c_1 & 0 & c_2 & \cdots \\
0 & c_1 & 0 & c_2 & 0 &   \\
c_1 & 0 & c_2 & 0 & c_3 &   \\
0 & c_2 & 0 & c_3 & 0 &   \\
c_2 & 0 & c_3 & 0 & c_4 &  \\
\vdots & & & & & \ddots
\end{array}
\right|
=\det \bmatrix {\bf A} &  \\  & {\bf B} \endbmatrix
$$
where ${\bf A}=[c_{i+j}]_{0 \leq i,j \leq \lfloor \frac{n}2 \rfloor-1}$ and ${\bf B}=[c_{i+j+1}]_{0 \leq i,j \leq \lfloor \frac{n-1}2 \rfloor-1}$. Now the statement of the theorem follows immediately.
\end{proof}

It is known (see for example \cite{kratt}) that $\H\(\seqn{C_n}\)=\H\(\seqn{C_{n+1}}\)=\seqn{1}$. Using Theorem \ref{thm:aerated} (for $c_n=C_n$) we obtain the result $\H\(\seqn{C^a_n}\)=\seqn{1}$.

We also need the following proposition:

\begin{prop} Let $c=\seqn{c_n}$ be an arbitrary sequence and $h=\H(c)$ its Hankel transform. Then $\H\( \seqn{r^n c_n}\)=\seqn{r^{n(n+1)}h_n}$ where $r$ is an arbitrary number.
\label{prop:mul}
\end{prop}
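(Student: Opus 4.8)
The plan is to realize the Hankel matrix of the scaled sequence as a symmetric conjugation of the original Hankel matrix by a diagonal matrix, and then invoke the multiplicativity of the determinant. First I would write down the general entry of the Hankel matrix of $b=\seqn{r^n c_n}$, namely $b_{i+j}=r^{i+j}c_{i+j}$, and exploit the factorization $r^{i+j}=r^i\cdot r^j$, which separates cleanly into a row factor and a column factor.

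Concretely, setting $D_n=\diag(1,r,r^2,\ldots,r^n)$, the $(i,j)$ entry of $D_n M D_n$ is $r^i M_{ij} r^j$ for any matrix $M$. Taking $M=[c_{i+j}]_{0\le i,j\le n}$, the truncated Hankel matrix of $b$ is thus exactly $D_n\,[c_{i+j}]_{0\le i,j\le n}\,D_n$. This is the key structural observation, and it mirrors the role played by $\BB^\alpha$ in Lemma \ref{lem:binmatrix}, except that here the conjugating matrix is diagonal rather than the $\alpha$-binomial matrix.

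Next I would apply $\det(XYZ)=\det X\det Y\det Z$ to obtain, for the Hankel determinant $g_n$ of $b$, the identity $g_n=\det(D_n)^2\,h_n$. It then remains to evaluate $\det(D_n)=\prod_{k=0}^{n} r^k = r^{0+1+\cdots+n}=r^{n(n+1)/2}$, so that $\det(D_n)^2=r^{n(n+1)}$ and therefore $g_n=r^{n(n+1)}h_n$, which is the claimed formula.

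The argument is essentially routine and I do not anticipate a genuine obstacle; the only point requiring care is bookkeeping of the index range. Since $i,j$ run over $0\le i,j\le n$, the matrices are of size $(n+1)\times(n+1)$, and the exponent sum is $\sum_{k=0}^{n}k=n(n+1)/2$, whose doubling yields precisely the exponent $n(n+1)$ appearing in the statement. Getting this exponent right (rather than, say, $n(n-1)$ from an off-by-one in the range) is the one place a careless reader could slip.
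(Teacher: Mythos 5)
Your proof is correct: the identity $[r^{i+j}c_{i+j}]_{0\le i,j\le n}=D_n\,[c_{i+j}]_{0\le i,j\le n}\,D_n$ with $D_n=\diag(1,r,\ldots,r^n)$ and $\det(D_n)^2=r^{n(n+1)}$ gives exactly the claimed formula. The paper in fact states Proposition \ref{prop:mul} without any proof, so there is nothing to compare against; your diagonal-conjugation argument is the standard one and supplies the omitted justification, with the index bookkeeping handled correctly.
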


\medskip

Let $c_n=\beta^n C_n$ and let $p=\A(c)$. Recall that $u_n^*$ can be expressed as follows (directly from \eqref{formulaUN}):
$$
u^*_n=u_{n+1}=\sum_{k=0}^{[\frac{n}{2}]} \binom {n}{2k}\alpha^{n-2k}\beta^{k}C_k=\sum_{l=0}^n \binom nl \alpha^{n-l}p_l
$$
which implies that $\seqn{u^*_n}=\Bf\(p;\alpha\)$.

Note that the sequence $p=\A(c)$ can be expressed as $p=\seqn{\beta^{n/2}C^a_n}$. Now using Lemma \ref{lem:Bf} and Proposition \ref{prop:mul} we obtain the result of Theorem \ref{thm:h_n*}:
$$
h^*=\H(u^*)=\H\(p\)=\H\(\seqn{\beta^{n/2}C^a_n} \)=\seqn{\beta^{\binom{n+1}2}}.
$$

\section{The Hankel transform of a linear combination of Catalan and shifted Catalan numbers}

Cvetkovi\'c, Rajkovi\' c and Ivkovic \cite{CRI} considered the Hankel transform of the sequence $\seqn{C_n+C_{n+1}}$. In this section, we generalize their result, providing the Hankel transform evaluation of the sequences $\seqn{\alpha^2 C_n-\beta C_{n+1}}$ and $\seqn{\alpha^2 C_{n+1}-\beta C_{n+2}}$. We also need this result in the next section.

\smallskip

We use the method based on orthogonal polynomials, as used in \cite{CRI,RPB}. It assumes that a given sequence $\seqn{a_n}$ is a {\it moment sequence} with respect to some weight function (measure) $w(x)$, i.e. that there holds
$$
a_n=\int_\R x^n w(x)dx.
$$
If $h=\H(a)$ and $h_n\neq 0$ for all $n\in \N_0$, then there exists a sequence of monic orthogonal polynomials $\seqn{\pi_n(x)}$, which satisfies a three-term recurrence relation
\bb
\pi_{n+1}(x)=(x-\alpha_n)\pi_n(x)-\beta_n\pi_{n-1}(x).
\ee
The Hankel transform $h_n$ can be evaluated using the following Heilermann formula (see for example \cite{kratt}):
\bb
\label{formula}
h_n=a_0^{n+1} \beta_1^{n} \beta_2^{n-1} \cdots \beta_{n-1}^2 \beta_{n}.
\ee
In order to establish closed-form expression for coefficients $\alpha_n$ and $\beta_n$, the following transformation lemmas can be useful:

\begin{lem} {\bf \cite{part1}}
Let $w(x)$ and $\tilde w(x)$ be weight functions and denote by $\seqn{\pi_n(x)}$ and $\seqn{\tilde \pi_n(x)}$ the corresponding
orthogonal polynomials. Also denote by $\seqn{\alpha_n},\seqn{\beta_n}$ and $\seqn{\tilde\alpha_n},\seqn{\tilde \beta_n}$ the three-term relation coefficients corresponding to $w(x)$ and $\tilde w(x)$ respectively. The following transformation formulas are valid:
\begin{itemize}
\item[{\bf (1)}] If $\tilde w(x)=Cw(x)$ where $C>0$ then we have
$\tilde \alpha_n=\alpha_n$ for $n \in \N_0$ and
$\tilde\beta_0=C\beta_0$, $\tilde \beta_n=\beta_n$ for $n \in \N$.
Additionally there holds $\tilde \pi_n(x)=\pi_n(x)$ for all $n \in
\N_0$. \item[{\bf (2)}] If $\tilde w(x)=w(ax+b)$ where $a,b \in
\R$ and $a\neq 0$ there holds $\tilde\alpha_n =
\frac{\alpha_n-b}{a}$ for $n \in \N_0$ and $\tilde\beta_0 =
\frac{\beta_0}{|a|}$ and $\tilde\beta_n  = \frac{\beta_n}{a^2}$
for $n \in \N$. Additionally there holds $\tilde
\pi_n(x)=\frac1{a^n}\pi_n (ax+b)$.
\end{itemize}
\label{lem:ModWeight1}
\end{lem}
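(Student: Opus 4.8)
The plan is to work with the bilinear form $\langle f,g\rangle_w=\int_\R f(x)g(x)w(x)\,dx$ attached to each weight and to use the standard expressions for the recurrence coefficients in terms of this form, namely
$$
\alpha_n=\frac{\langle x\pi_n,\pi_n\rangle_w}{\langle\pi_n,\pi_n\rangle_w},\qquad \beta_n=\frac{\langle\pi_n,\pi_n\rangle_w}{\langle\pi_{n-1},\pi_{n-1}\rangle_w}\ \ (n\geq 1),\qquad \beta_0=\langle 1,1\rangle_w.
$$
Everything then reduces to tracking how $\langle\,\cdot\,,\cdot\,\rangle$ transforms under the two modifications of the weight, together with the uniqueness of the monic orthogonal family, which lets me identify $\tilde\pi_n$ once I exhibit a monic degree-$n$ polynomial orthogonal with respect to $\tilde w$.

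For part (1) I would observe that $\tilde w=Cw$ merely rescales the form, $\langle f,g\rangle_{\tilde w}=C\langle f,g\rangle_w$. Since orthogonality is a homogeneous condition, the monic polynomials are unchanged, giving $\tilde\pi_n=\pi_n$ at once. Substituting this into the formulas above, the factor $C$ cancels between numerator and denominator in both $\tilde\alpha_n$ and $\tilde\beta_n$ for $n\geq 1$, whereas $\tilde\beta_0=\langle 1,1\rangle_{\tilde w}=C\langle 1,1\rangle_w=C\beta_0$. This settles (1) with essentially no computation.

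For part (2) the substitution $y=ax+b$ is the engine. Performing it in $\langle f,g\rangle_{\tilde w}=\int_\R f(x)g(x)w(ax+b)\,dx$ yields
$$
\langle f,g\rangle_{\tilde w}=\frac1{|a|}\int_\R f\!\big(\tfrac{y-b}{a}\big)\,g\!\big(\tfrac{y-b}{a}\big)\,w(y)\,dy,
$$
the absolute value appearing because the orientation of the integral reverses when $a<0$. I would then verify the ansatz $\tilde\pi_n(x)=a^{-n}\pi_n(ax+b)$: it is monic of degree $n$, and since $\tilde\pi_n\big(\tfrac{y-b}{a}\big)=a^{-n}\pi_n(y)$, the computation $\langle\tilde\pi_m,\tilde\pi_n\rangle_{\tilde w}=\frac{1}{|a|\,a^{m+n}}\langle\pi_m,\pi_n\rangle_w$ shows it is orthogonal, so by uniqueness it is the correct monic family. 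To extract the recurrence coefficients I would feed this ansatz into $\tilde\pi_{n+1}(x)=(x-\tilde\alpha_n)\tilde\pi_n(x)-\tilde\beta_n\tilde\pi_{n-1}(x)$, set $y=ax+b$ (so $x=(y-b)/a$), clear the powers of $a$, and compare with the original recurrence for $\pi_{n+1}(y)$. Matching the coefficient of $\pi_n(y)$ gives $b+a\tilde\alpha_n=\alpha_n$, i.e. $\tilde\alpha_n=(\alpha_n-b)/a$, and matching the coefficient of $\pi_{n-1}(y)$ gives $a^2\tilde\beta_n=\beta_n$, i.e. $\tilde\beta_n=\beta_n/a^2$ for $n\geq 1$. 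Finally $\tilde\beta_0=\langle 1,1\rangle_{\tilde w}=\frac1{|a|}\langle 1,1\rangle_w=\beta_0/|a|$ is read off directly from the transformed form.

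The one place that needs care, and where I expect the only real friction, is the bookkeeping of the sign of $a$: the single power $1/|a|$ governs the normalisation $\tilde\beta_0$ (and enters the derivation of $\tilde\pi_n$), while the coefficients $\tilde\beta_n$ for $n\geq 1$, being ratios of consecutive squared norms, only ever feel $a^2$ and are therefore insensitive to the sign of $a$. Keeping these two regimes separate, and making sure the limits of integration are flipped correctly when $a<0$, is the crux; the remainder is the routine coefficient matching sketched above.
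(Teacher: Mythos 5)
Your argument is correct: the scaling of the bilinear form in part (1), and the substitution $y=ax+b$ together with the monic ansatz $\tilde\pi_n(x)=a^{-n}\pi_n(ax+b)$ and coefficient matching in part (2), are exactly the standard derivation, and your handling of the $1/|a|$ versus $1/a^2$ distinction is right. Note that this paper states the lemma without proof, importing it from \cite{part1}, so there is no internal proof to compare against; your write-up is a complete and correct justification of the quoted result.
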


\begin{lem} {\bf (Linear multiplier transformation) \cite{gautschi}} Consider the same notation as in Lemma \ref{lem:ModWeight1}.
Let the sequence $\seqn{r_n}$ be defined by
\bb
r_0=c-\alpha_0,\qquad r_n=c-\alpha_n- \frac{\beta_n}{r_{n-1}}
\qquad (n \in \N_0).
\label{eq:r2}
\ee
If $\tilde w(x)=(x-c)w(x)$
where $c< \inf \supp(w),$ then there holds
\bb
\aligned
\tilde\beta_0&=\int_{\R}\tilde w(x)\ dx, \quad \tilde\beta_n=\beta_n\frac{r_n}{r_{n-1}}, \qquad (n \in \N), \\
\tilde\alpha_n&=\alpha_{n+1}+r_{n+1}-r_n, \quad (n \in \N_0).\
\endaligned
\label{eq:xcm2}
\ee
\label{lem:ModWeight3}
\end{lem}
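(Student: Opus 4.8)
The plan is to construct the modified monic orthogonal polynomials $\seqn{\tilde\pi_n}$ explicitly from the $\seqn{\pi_n}$ by a Christoffel-type transformation and then read off the new recurrence coefficients by comparison. First I would give the auxiliary sequence $\seqn{r_n}$ a closed interpretation: evaluating the three-term recurrence $\pi_{n+1}(x)=(x-\alpha_n)\pi_n(x)-\beta_n\pi_{n-1}(x)$ at $x=c$ and dividing by $\pi_n(c)$ yields $\frac{\pi_{n+1}(c)}{\pi_n(c)}=(c-\alpha_n)-\frac{\beta_n}{\pi_n(c)/\pi_{n-1}(c)}$, while $\frac{\pi_1(c)}{\pi_0(c)}=c-\alpha_0$. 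Hence $r_n=\frac{\pi_{n+1}(c)}{\pi_n(c)}$, and the defining recursion \eqref{eq:r2} is exactly this ratio recursion. The hypothesis $c<\inf\supp(w)$ guarantees that every zero of $\pi_n$ lies strictly to the right of $c$ (the zeros of orthogonal polynomials lie in the interior of the convex hull of the support), so each $\pi_n(c)\neq 0$ and the ratios $r_n$ are well defined and nonzero; this sign/nonvanishing check is the one genuinely delicate point of the argument.

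With the $r_n$ identified, I would set
$$\tilde\pi_n(x)=\frac{1}{x-c}\bigl[\pi_{n+1}(x)-r_n\,\pi_n(x)\bigr].$$
Because $r_n=\pi_{n+1}(c)/\pi_n(c)$, the bracket vanishes at $x=c$, so $\tilde\pi_n$ is a genuine polynomial, and comparing leading terms shows it is monic of degree $n$. Orthogonality with respect to $\tilde w(x)=(x-c)w(x)$ is then immediate: for every $m<n$,
$$\int_\R \tilde\pi_n(x)\,x^m\,\tilde w(x)\,dx=\int_\R\bigl[\pi_{n+1}(x)-r_n\pi_n(x)\bigr]x^m\,w(x)\,dx=0,$$
since both $\pi_{n+1}\perp x^m$ and $\pi_n\perp x^m$ under $w$ whenever $m<n$. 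Thus $\seqn{\tilde\pi_n}$ is precisely the monic orthogonal family attached to $\tilde w$, and $\tilde\beta_0=\int_\R\tilde w(x)\,dx$ is just the zeroth moment by the usual normalisation.

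The final step extracts $\tilde\alpha_n$ and $\tilde\beta_n$. I would write the (as yet undetermined) recurrence $\tilde\pi_{n+1}=(x-\tilde\alpha_n)\tilde\pi_n-\tilde\beta_n\tilde\pi_{n-1}$, multiply through by $(x-c)$, and substitute the relation above on both sides to express everything in the basis $\seqn{\pi_n}$. Using the original recurrence to rewrite $x\pi_{n+1}$ and $x\pi_n$, I would expand both sides in terms of $\pi_{n+2},\pi_{n+1},\pi_n,\pi_{n-1}$ and match coefficients, which are linearly independent. The coefficient of $\pi_{n+1}$ gives $\tilde\alpha_n=\alpha_{n+1}+r_{n+1}-r_n$ and the coefficient of $\pi_{n-1}$ gives $\tilde\beta_n=\beta_n\,r_n/r_{n-1}$, exactly \eqref{eq:xcm2}; the coefficient of $\pi_n$ furnishes a consistency check that collapses to $0=0$ upon inserting the recursion \eqref{eq:r2} for $r_n$ and $r_{n+1}$. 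This bookkeeping is routine once the substitution is in place, so the only real obstacle remains securing $\pi_n(c)\neq 0$ and hence the well-definedness and nonvanishing of the $r_n$, which is precisely what the assumption $c<\inf\supp(w)$ delivers.
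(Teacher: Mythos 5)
The paper offers no proof of this lemma at all --- it is quoted verbatim from Gautschi's book \cite{gautschi} and used as a black box --- so there is nothing internal to compare against. Your argument is correct and is, in substance, the classical Christoffel (kernel-polynomial) derivation that one finds in the cited source. The key identification $r_n=\pi_{n+1}(c)/\pi_n(c)$ is right: evaluating the three-term recurrence at $x=c$ reproduces exactly the recursion \eqref{eq:r2}, and the hypothesis $c<\inf\supp(w)$ forces $\pi_n(c)\neq 0$ (indeed $\operatorname{sgn}\pi_n(c)=(-1)^n$, so $r_n<0$), which is the one point that needs care. The polynomials $\tilde\pi_n(x)=\frac{1}{x-c}\bigl[\pi_{n+1}(x)-r_n\pi_n(x)\bigr]$ are monic of degree $n$ and orthogonal to $x^m$ for $m<n$ under $\tilde w$, hence are the monic OPS for $\tilde w$; and the coefficient comparison in the basis $\pi_{n+2},\pi_{n+1},\pi_n,\pi_{n-1}$ does yield $\tilde\alpha_n=\alpha_{n+1}+r_{n+1}-r_n$ from the $\pi_{n+1}$ term and $\tilde\beta_n=\beta_n r_n/r_{n-1}$ from the $\pi_{n-1}$ term, with the $\pi_n$ coefficient collapsing to $0=0$ via \eqref{eq:r2} (I checked this identity; it holds). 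Two cosmetic remarks: the recursion in \eqref{eq:r2} should run over $n\in\N$ rather than $n\in\N_0$ (a typo in the statement that you implicitly correct), and $\tilde\beta_0=\int_\R\tilde w$ is just the standard normalisation convention for the zeroth recurrence coefficient, as you note.
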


Now we prove the main theorem of this section. The proof is based on the sequential application of the previous two lemmas for a known weight function, i.e. a weight function whose coefficients $\alpha_n$ and $\beta_n$ are known.

\begin{thm} The Hankel transforms of the sequences $\seqn{\alpha^2 C_n-\beta C_{n+1}}$ and $\seqn{\alpha^2 C_{n+1}-\beta C_{n+2}}$ can be evaluated as follows
$$
\aligned
\det[\alpha^2C_{i+j}-\beta C_{i+j+1}]_{0 \leq i,j \leq n}&=\frac{1}{2^{2n+3} \sqrt{\alpha^2-4\beta}} \[(\alpha+\sqrt{\alpha^2-4\beta})^{2n+3}-(\alpha-\sqrt{\alpha^2-4\beta})^{2n+3}\] \\
\det[\alpha^2C_{i+j+1}-\beta C_{i+j+2}]_{0 \leq i,j \leq n}&=\frac{1}{2^{2n+4} \alpha \sqrt{\alpha^2-4\beta}} \[(\alpha+\sqrt{\alpha^2-4\beta})^{2n+4}-(\alpha-\sqrt{\alpha^2-4\beta})^{2n+4}\]
\endaligned
$$
\label{thm:CatLin}
\end{thm}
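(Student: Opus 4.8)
The plan is to realize both sequences as moment sequences of explicitly modified Catalan weights and then invoke the Heilermann formula \eqref{formula}. Recall that the Catalan numbers are the moments of the weight $w(x)=\frac{1}{2\pi}\sqrt{(4-x)/x}$ on $[0,4]$, whose monic orthogonal polynomials have the well-known three-term recurrence coefficients $\alpha_0=1$, $\alpha_k=2$ for $k\ge1$ and $\beta_k=1$ for $k\ge1$ (throughout this sketch $\alpha_k,\beta_k$ are recurrence coefficients, not the scalar parameters $\alpha,\beta$). Then $\seqn{\alpha^2 C_n-\beta C_{n+1}}$ is the moment sequence of $\tilde w(x)=(\alpha^2-\beta x)w(x)=-\beta\,(x-c)w(x)$ with $c=\alpha^2/\beta$, while $\seqn{\alpha^2 C_{n+1}-\beta C_{n+2}}$ is the moment sequence of $x\tilde w(x)$, i.e. the once-shifted moments of $\tilde w$.

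For the first determinant I would first apply the linear multiplier transformation (Lemma \ref{lem:ModWeight3}) with the shift $c=\alpha^2/\beta$ to obtain the recurrence coefficients of $(x-c)w$, and then account for the constant factor $-\beta$ via Lemma \ref{lem:ModWeight1}(1), which only rescales $\beta_0$ and leaves every $\alpha_n$ and every higher $\beta_n$ untouched. Everything is then governed by the auxiliary sequence $r_n$ of \eqref{eq:r2}, which here reads $r_0=c-1$ and $r_n=(c-2)-1/r_{n-1}$ for $n\ge1$. The crucial simplification is that this nonlinear (Riccati-type) recurrence linearizes: setting $r_n=x_{n+1}/x_n$ turns it into the constant-coefficient recurrence $x_{n+1}=(c-2)x_n-x_{n-1}$, whose characteristic equation $\lambda^2-(c-2)\lambda+1=0$ has discriminant $(c-2)^2-4=c(c-4)=\alpha^2(\alpha^2-4\beta)/\beta^2$. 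This is exactly where $\sqrt{\alpha^2-4\beta}$ enters, and the roots come out as $\lambda_\pm=(\alpha\pm\sqrt{\alpha^2-4\beta})^2/(4\beta)$.

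With a closed form for $r_n$ in hand I would feed $\tilde\beta_0=\alpha^2-\beta$ and $\tilde\beta_n=r_n/r_{n-1}$ $(n\ge1)$ into \eqref{formula}. The product $\tilde\beta_0^{\,n+1}\prod_{k=1}^n(r_k/r_{k-1})^{n+1-k}$ telescopes to $(\alpha^2-\beta)^{n+1}x_{n+1}/(c-1)^{n+1}$, and since $(\alpha^2-\beta)/(c-1)=\beta$ this collapses to $\beta^{n+1}x_{n+1}$; substituting the explicit $x_{n+1}$ reproduces the first stated formula. For the second determinant the cleanest route is the standard identity $\det[\mu_{i+j+1}]_{0\le i,j\le n}=(-1)^{n+1}\tilde\pi_{n+1}(0)\,\det[\mu_{i+j}]_{0\le i,j\le n}$, which relates the shifted Hankel determinant to the monic orthogonal polynomial of $\tilde w$ evaluated at $0$; alternatively one applies Lemma \ref{lem:ModWeight3} a second time with $c=0$ to $\tilde w$, whose coefficients $\tilde\alpha_n=2+r_{n+1}-r_n$ are already available, and reruns Heilermann. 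Either way the second determinant is $(-1)^{n+1}\tilde\pi_{n+1}(0)$ times the first, and evaluating $\tilde\pi_{n+1}(0)$ through its three-term recurrence produces the shift from exponent $2n+3$ to $2n+4$ together with the extra factor $1/\alpha$.

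The main obstacle is twofold. First, securing and manipulating the closed form of the nonlinear recurrence $r_n$ (and, for the second determinant, the analogous quantity $\tilde\pi_{n+1}(0)$) so that the Heilermann product telescopes cleanly; the bookkeeping of the exponents in $\prod_k\tilde\beta_k^{\,n+1-k}$ is where errors are easiest to make. Second, the hypotheses of Lemma \ref{lem:ModWeight3} demand $c<\inf\supp(w)=0$, which forces $\beta<0$ for the first weight and degenerates at $c=0$ for the second; I would circumvent this by establishing each identity on the parameter range where the hypotheses genuinely hold and then extending to all $\alpha,\beta$ by observing that both sides of each equality are polynomials in $\alpha$ and $\beta$, so that agreement on a nonempty open set forces agreement everywhere.
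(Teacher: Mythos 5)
Your proposal is correct in substance and, for the first determinant, follows essentially the same route as the paper: represent $\alpha^2C_n-\beta C_{n+1}$ as the moments of $(\alpha^2-\beta x)$ times the Catalan weight, apply the linear multiplier transformation with $c=\alpha^2/\beta$, and feed the resulting $\hat\beta_n=r_n/r_{n-1}$ into the Heilermann formula. The only difference there is how the answer is extracted: the paper converts $\hat h_{n+1}/\hat h_n=\beta r_{n+1}$ plus the Riccati relation into a second-order linear recurrence for $\hat h_n$ itself and solves it with two initial values, whereas you linearize $r_n=x_{n+1}/x_n$ first and telescope the Heilermann product down to $\beta^{n+1}x_{n+1}$; I checked that your telescoping and your roots $\lambda_\pm=(\alpha\pm\sqrt{\alpha^2-4\beta})^2/(4\beta)$ are correct, and in fact your characteristic equation $\lambda^2-(c-2)\lambda+1=0$ confirms that the middle coefficient in the paper's displayed recurrence \eqref{eq:lemhn} should be $\alpha^2-2\beta$ rather than $\alpha^2-\beta$ (the stated initial values and final formula are consistent with $\alpha^2-2\beta$). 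For the second determinant your route genuinely diverges: the paper restarts the entire transformation chain from the Chebyshev weight of the second kind and re-derives the same recurrence with new initial values, while you invoke the classical identity $\det[\mu_{i+j+1}]_{0\le i,j\le n}=(-1)^{n+1}\tilde\pi_{n+1}(0)\det[\mu_{i+j}]_{0\le i,j\le n}$ and evaluate $\tilde\pi_{n+1}(0)$ from its three-term recurrence. That is a sound and arguably more economical plan, though you leave the actual evaluation of $\tilde\pi_{n+1}(0)$ (which is where the factor $1/\alpha$ and the exponent shift must emerge) unexecuted, so it is a sketch rather than a finished argument at that point. Finally, your observation that the hypothesis $c<\inf\supp(w)=0$ of Lemma \ref{lem:ModWeight3} fails for $\beta>0$ (where $c=\alpha^2/\beta>0$), together with the fix of proving the identity on a parameter region where it holds and extending by polynomiality in $\alpha,\beta$, addresses a gap that the paper passes over in silence; this is a point in your favor rather than against you.
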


\begin{proof}
We use again the method based on orthogonal polynomials. It is well-known (see for example \cite{CRI}) that the Catalan numbers have the following moment representation
$$
C_n=\frac1{2\pi}\int_0^4 x^n \sqrt{\frac4{x}-1} dx.
$$
 This directly implies that the sequence $\seqn{\alpha^2 C_n-\beta C_{n+1}}$ is the moment sequence of the weight function:
$$
\hat w(x)=\frac1{2\pi}(\alpha^2-\beta x)\sqrt{\frac4{x}-1}.
$$
In order to determine the three-term recurrence relation coefficients corresponding to $\hat w(x)$, we
start from the weight function of the monic Chebyshev polynomials of the fourth kind:
$$
w^{(0)}(x)=\sqrt{\frac{1-x}{1+x}}, \quad x\in[-1,1].
$$
The corresponding coefficients $\alpha^{(0)}_n$ and $\beta^{(0)}_n$ are
$$
\alpha^{(0)}_0=-1/2,\qquad \alpha^{(0)}_n=0,\quad n\geq 1,\qquad \beta^{(0)}_0=\pi, \qquad \beta^{(0)}_n=1/4, \quad n\geq 1.
$$
Now we define a new weight function $w^{(1)}(x)$ by
$$
w^{(1)}(x)=w^{(0)}\(\frac{x}2-1\)
$$
and use part {\bf (2)} of Lemma \ref{lem:ModWeight1} with $a=1/2$ and $b=-1$. Hence we obtain
$$
\alpha^{(1)}_0=1, \qquad \alpha^{(1)}_n=2,\quad n\geq 1,\qquad \beta^{(1)}_0=2\pi, \qquad \beta^{(1)}_n=1, \quad n\geq 1.
$$
The next transformation is
$$
w^{(2)}(x)=-\frac{\beta}{2\pi}\cdot w^{(1)}(x).
$$
From part {\bf (1)} of Lemma \ref{lem:ModWeight1} we see that
$$
\alpha^{(2)}_0=1, \qquad \alpha^{(2)}_n=2,\quad n\geq 1,\qquad \beta^{(2)}_0=-\beta, \qquad \beta^{(2)}_n=1, \quad n\geq 1.
$$
The final transformation is a linear multiplier transformation
$$
\hat w(x)=\(x-\frac{\alpha^2}{\beta}\)\cdot w^{(2)}(x).
$$
According to Lemma \ref{lem:ModWeight1} ($c=\alpha^2/\beta$) we have to consider the following temporary sequence
\bb
r_0=\frac{\alpha^2}\beta-1, \qquad r_n=\frac{\alpha^2}\beta-2-\frac1{r_{n-1}}
\label{eq:lemdifr}
\ee
and the coefficients $\hat \beta_n$ are obtained by
$$
\hat \beta_0=\alpha^2C_0-\beta C_1=\alpha^2-\beta, \qquad \hat \beta_n=\beta^{(2)}_n \frac{r_n}{r_{n-1}}, \quad n\in \N.
$$
The Heilermann formula now yields
$$
\frac{\hat h_{n+1}}{\hat h_n}=\hat \beta_0 \hat \beta_1 \cdots \hat \beta_{n+1}=\beta r_{n+1}.
$$
Replacing the last expression into \eqref{eq:lemdifr} we obtain the following linear difference equation
\bb
\hat h_n-(\alpha^2-\beta)\hat h_{n-1}+\beta^2\hat h_{n-2}=0, \qquad (n\geq 2)
\label{eq:lemhn}
\ee
where the initial values are given by $\hat h_0=\alpha^2-\beta$ and $\hat h_1=\alpha^4-3\alpha^2\beta+\beta^2$. By solving \eqref{eq:lemhn}, we directly obtain the first statement of lemma.

\medskip

To prove the second statement, let us observe that the weight function of the sequence $\seqn{\alpha^2 C_{n+1}-\beta C_{n+2}}$ is equal to
$$
\breve w(x)=\frac1{2\pi}x(\alpha^2-\beta x)\sqrt{\frac4{x}-1}=
\frac{\beta}{\pi}\cdot \(\frac{\alpha^2}{\beta}-x\)\cdot \sqrt{1-\(\frac{x-2}{2}\)^2}.
$$
The initial weight function now is the weight function of Chebyshev polynomials of the second kind:
$$
w^{(0)}(x)=\sqrt{1-x^2}, \quad x\in[-1,1].
$$
As in the previous case, by applying the following sequence of transformations
$$
w^{(1)}(x)=w^{(0)}\(\frac{x-2}2\), \quad
w^{(2)}(x)=-\frac{\beta}{\pi}\cdot w^{(1)}(x), \quad
\breve w(x)=\(x-\frac{\alpha^2}{\beta}\)\cdot w^{(2)}(x)
$$
we prove that the sequence $\seqn{\breve h_n}$ satisfies the same linear difference equation
\bb
\breve h_n-(\alpha^2-\beta)\breve h_{n-1}+\beta^2\breve h_{n-2}=0, \qquad (n\geq 2)
\label{eq:lemhn1}
\ee
but with different initial values $\breve h_0=\alpha^2-2\beta$ and $\breve h_1=\alpha^4-3\alpha^2\beta+3\beta^2$. By solving \eqref{eq:lemhn1} we obtain the second statement of lemma.
\end{proof}

\section{The sequence $u_n^{**}$}
\label{sect:dnun**}

We can also express the sequence $u_n^{**}=u_{n+2}$ as the falling $\alpha$-binomial transformation of a certain sequence, whose Hankel determinant will be evaluated. First we need to define the generalization of the aerating transform $\A(p)$.

\begin{dfn} For a given sequence $c=\seqn{c_n}$, we define its $\alpha$-aerating transformation $b=\A(c;\alpha)$ by $a_n=\alpha p_n+p_{n+1}$, where $p=\A(c)$.
In other words, if $a=\A(c;\alpha)$ then $a=(\alpha c_0,c_1,\alpha c_1,c_2,\alpha c_2,c_3,\alpha c_3,\ldots)$.
\end{dfn}

Let $a=\A\(c;\alpha \)$, i.e. the $\alpha$-aerating transform of the sequence $c_n=\beta^n C_n$. The sequence $a=\seqn{a_n}$ can be expressed as follows
\bb
a_n=\begin{cases}
\alpha \beta^k C_k,& n=2k \\
\beta^k C_k, & n=2k-1
\end{cases}.
\label{eq:dna_n}
\ee
According to \eqref{formulaUN} we have
$$
\aligned
u^{**}_n&=\sum_{k=0}^{[\frac{n+1}{2}]} \binom {n+1}{2k}\alpha^{n+1-2k}\beta^{k}C_k\\
&=\sum_{k=0}^{[\frac{n+1}{2}]} \binom {n}{2k-1}\alpha^{n-(2k-1)}\beta^{k}C_k+\sum_{k=0}^{[\frac{n+1}{2}]} \binom {n}{2k}\alpha^{n-2k}(\alpha \beta^{k} C_k)\\
&=\sum_{l=0}^n \binom nl \alpha^{n-l} a_l.
\endaligned
$$
Hence $u^{**}=\Bf(a;\alpha)$ and from Lemma \ref{lem:Bf} we conclude that $\H(u^{**})=\H(a)$. To evaluate $\H(a)$, we need the following theorem.

From now on, we denote by $[{\bf A}]_{m\times m}$ a matrix formed by first $m$ rows and columns of the (infinite) matrix ${\bf A}$. Also we label rows and columns of matrices starting from $0$ (i.e. $0,1,2,\ldots$).

\begin{thm} Let $g=\H(a)$ and $a=\A(c;\alpha)$. Then the following holds
\label{thm:genaer}
\bb
\aligned
g_n&=\det[a_{i+j}]_{0\leq i,j\leq n}\\
&=\begin{cases}
\det[\alpha^2c_{i+j}-c_{i+j+1}]_{0 \leq i,j \leq k-1} \cdot \det[c_{i+j+1}]_{0 \leq i,j \leq k-1},& n=2k-1\\
\alpha\cdot \det[\alpha^2c_{i+j+1}-c_{i+j+2}]_{0 \leq i,j \leq k-1} \cdot \det[c_{i+j+1}]_{0 \leq i,j \leq k},& n=2k
\end{cases}.
\endaligned
\label{eq:genaer}
\ee
\end{thm}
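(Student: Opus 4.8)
The plan is to mimic the structure of the proof of Theorem \ref{thm:aerated}, but now accounting for the $\alpha$-aeration. Recall from \eqref{eq:dna_n} that the sequence $a=\A(c;\alpha)$ has the explicit pattern $a=(\alpha c_0, c_1, \alpha c_1, c_2, \alpha c_2, c_3,\ldots)$, so the Hankel matrix $[a_{i+j}]_{0\leq i,j\leq n}$ has a checkerboard structure: the entry $a_{i+j}$ equals $\alpha c_{(i+j)/2}$ when $i+j$ is even and $c_{(i+j+1)/2}$ when $i+j$ is odd. First I would separate the index set $\{0,1,\ldots,n\}$ into even and odd indices, and perform the simultaneous row and column permutation that groups all even-indexed rows/columns together and all odd-indexed ones together. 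Since the parity of $a_{i+j}$ depends only on the parity of $i+j$, this permutation blocks the matrix: the even-even block and the odd-odd block (both carrying factors of $\alpha$ on their even-sum entries and $c$-values on their odd-sum entries) decouple, while the even-odd cross blocks vanish, exactly as in the aerated case. The sign introduced by applying the same permutation to rows and columns is a perfect square, hence $+1$, so no sign tracking is needed.

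Next I would identify each of the two diagonal blocks as a genuine Hankel determinant. Carrying out the even/odd splitting, one block will consist of entries indexed by even sums and will reduce to $\det[\alpha^2 c_{i+j}-c_{i+j+1}]$ (after factoring an $\alpha$ out of certain rows or columns to combine the $\alpha c_k$ and $c_k$ contributions — this is where the quadratic $\alpha^2 c - c'$ combination arises), and the other block will reduce to a shifted Hankel determinant $\det[c_{i+j+1}]$. The precise index ranges $0\leq i,j\leq k-1$ versus $0\leq i,j\leq k$ will come from a careful count of how many even versus odd indices lie in $\{0,\ldots,n\}$, which differs according to whether $n=2k-1$ or $n=2k$; the isolated prefactor $\alpha$ in the $n=2k$ case comes from an unpaired even-indexed entry that cannot be absorbed into a $2\times 2$-type combination.

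The main obstacle, and the step I expect to require the most care, is making the block decomposition precise enough to produce the combination $\alpha^2 c_{i+j}-c_{i+j+1}$ rather than a messier expression. The subtlety is that after the parity permutation one does not immediately see Hankel matrices in the variables $c_k$; instead one sees matrices whose entries alternate between $\alpha c_k$ and $c_{k}$ along the block, and the clean quadratic combination only emerges after a suitable sequence of elementary row/column operations (subtracting scaled adjacent rows) that telescopes the aeration pattern into the desired second-kind expression. I would therefore verify the block reduction carefully on the small cases $n=1,2,3$ first to fix the exact operations and index bookkeeping, and then state the general pattern, since the combinatorics of which rows combine with which — and the resulting off-by-one in the determinant sizes between the two parity cases — is the only genuinely delicate point; the rest is the same permute-and-block argument already used for Theorem \ref{thm:aerated}.
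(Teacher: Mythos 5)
There is a genuine gap at the decoupling step. You assert that after simultaneously permuting rows and columns by parity ``the even--odd cross blocks vanish, exactly as in the aerated case.'' This is false for $a=\A(c;\alpha)$: unlike $p=\A(c)$, the odd-indexed terms $a_{2k-1}=c_k$ are not zero, so the entry in an even row and an odd column (or vice versa) equals $c_{(i+j+1)/2}$, and both cross blocks are full shifted Hankel matrices $[c_{i+j+1}]$. For $n=2k-1$ the permutation alone produces a matrix of the block form $\left(\begin{smallmatrix}\alpha H_0 & H_1\\ H_1 & \alpha H_1\end{smallmatrix}\right)$ with $H_0=[c_{i+j}]_{0\leq i,j\leq k-1}$ and $H_1=[c_{i+j+1}]_{0\leq i,j\leq k-1}$, which is not block triangular, so your plan stalls exactly where the factorization is supposed to occur. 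The paper's proof performs the elementary operations \emph{before} permuting: subtracting $\alpha^{-1}$ times column $2j+1$ from column $2j$ annihilates every entry of the even columns that lies in an odd row (since $c_{s+j+1}-\alpha^{-1}\cdot\alpha c_{s+j+1}=0$), and only then does the parity permutation give a block-triangular determinant with blocks $[\alpha c_{i+j}-\alpha^{-1}c_{i+j+1}]$ and $[\alpha c_{i+j+1}]$; redistributing powers of $\alpha$ between the two blocks yields $\det[\alpha^2c_{i+j}-c_{i+j+1}]\cdot\det[c_{i+j+1}]$, and the isolated prefactor $\alpha$ in the case $n=2k$ arises from the size mismatch $\alpha^{k+1}\cdot\alpha^{-k}$, not from an ``unpaired entry.''

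Your outline is salvageable with one block-level correction: after the parity permutation, subtract $\alpha^{-1}$ times the odd block-row from the even block-row; this replaces the $(1,1)$ block by $\alpha H_0-\alpha^{-1}H_1$ and zeroes the $(1,2)$ block because $H_1-\alpha^{-1}(\alpha H_1)=0$. That is precisely the paper's entrywise column operation, and it is the step your write-up relegates to an afterthought (``subtracting scaled adjacent rows'') while misattributing the vanishing of the cross blocks to the permutation itself. As written, the argument does not go through.
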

\begin{proof} The determinant $\det[a_{i+j}]_{0\leq i,j\leq n}$  that we wish to evaluate, has the form
$$
\det[a_{i+j}]_{0\leq i,j\leq n}=
\det \bmatrix
\alpha c_0 & c_1 & \alpha c_1 & c_2 & \cdots \\
c_1 & \alpha c_1 & c_2 & \alpha c_2 & \\
\alpha c_1 & c_2 & \alpha c_2 & c_3 & \\
c_2 & \alpha c_2 & c_3 & \alpha c_3 & \\
\vdots & & & & \ddots
\endbmatrix_{(n+1)\times (n+1)}.
$$

We distinguish two cases depending on the parity of $n$.

\noindent {\bf Case 1.} $n=2k-1$ is odd. We multiply column $2j+1$ by $\alpha^{-1}$ and subtract from the column $2j$, for every $j=0,1,\ldots,k-2$. That leads to the following determinant
$$
\det[a_{i+j}]_{0\leq i,j\leq n}=
\det \bmatrix
\alpha c_0-\alpha^{-1} c_1 & c_1 & \alpha c_1-\alpha^{-1}c_2 & c_2 & \cdots \\
0 & \alpha c_1 & 0 & \alpha c_2 & \\
\alpha c_1-\alpha^{-1} c_2 & c_2 & \alpha c_2-\alpha^{-1}c_3 & c_3 & \\
0 & \alpha c_2 & 0 & \alpha c_3 & \\
\vdots & & & & \ddots
\endbmatrix_{(n+1)\times (n+1)}
$$
By exchanging rows and columns we get the block diagonal form:
$$
\det[a_{i+j}]_{0\leq i,j\leq n}=\det \bmatrix {\bf A} & * \\ & {\bf B} \endbmatrix=\det {\bf A} \cdot \det {\bf B},
$$
where star ($*$) denotes the appropriate $k\times k$ matrix which does not have an influence in determinant computation. Matrices ${\bf A}$ and ${\bf B}$ are given by
$$
{\bf A}=[\alpha c_{i+j}-\alpha^{-1}c_{i+j+1}]_{0\leq i,j \leq k-1}, \quad {\bf B}=[\alpha c_{i+j+1}]_{0\leq i,j \leq k-1}.
$$
By taking $\alpha$ from each column of matrix ${\bf B}$ and putting to the corresponding column of matrix ${\bf A}$, we get the first case of the expression \eqref{eq:genaer}.

\smallskip

\noindent {\bf Case 2.} $n=2k$ is even. Multiplying column $2j$ by $\alpha^{-1}$ and subtracting from the column $2j-1$ (for every $j=1,2,\ldots,k$) yields the determinant
$$
\det[a_{i+j}]_{0\leq i,j\leq n}=\det\bmatrix
\alpha c_0 & 0 & \alpha c_1 & 0 & \alpha c_2 & \cdots \\
c_1 & \alpha c_1-\alpha^{-1}c_2 & c_2 & \alpha c_2-\alpha^{-1}c_3 & c_3 & \\
\alpha c_1 & 0 & \alpha c_2 & 0 & \alpha c_3 \\
c_2 & \alpha c_2-\alpha^{-1}c_3 & c_3 & \alpha c_3-\alpha^{-1}c_4 & \alpha c_3 & \\
\alpha c_2 & 0 & \alpha c_3 & 0 & \alpha c_4 \\
\vdots & & & & & \ddots
\endbmatrix_{(n+1)\times (n+1)}.
$$
Again, by exchanging rows and columns we get the block diagonal form:
$$
\det[a_{i+j}]_{0\leq i,j\leq n}=\det \bmatrix {\bf A'} & * \\ & {\bf B'} \endbmatrix=\det {\bf A'} \cdot \det {\bf B'},
$$
where
$$
{\bf A'}=[\alpha c_{i+j+1}]_{0\leq i,j \leq k}, \quad {\bf B'}=[\alpha c_{i+j+1}-\alpha^{-1}c_{i+j+2}]_{0\leq i,j \leq k-1}.
$$
By taking $\alpha$ from each column of the matrix ${\bf B'}$ and putting the first $k$ entries to the corresponding columns of the matrix ${\bf A'}$, we obtain the second part of \eqref{eq:genaer}.
\end{proof}

According to the previous theorem, Proposition \ref{prop:mul} and the fact that $\H\(\seqn{C_n}\)=\seqn{1}$, for any odd $n=2k-1$, it holds that
$$
\aligned
\det[a_{i+j}]_{0\leq i,j\leq n}&=
\det[\alpha^2\beta^{i+j}C_{i+j}-\beta^{i+j+1}C_{i+j+1}]_{0 \leq i,j \leq k-1} \cdot \det[\beta^{i+j+1}C_{i+j+1}]_{0 \leq i,j \leq k-1}\\
&=\beta^{\binom n2}\det[\alpha^2C_{i+j}-\beta C_{i+j+1}]_{0 \leq i,j \leq k-1} \cdot \det[C_{i+j+1}]_{0 \leq i,j \leq k-1}\\
&=\beta^{\binom n2}\det[\alpha^2C_{i+j}-\beta C_{i+j+1}]_{0 \leq i,j \leq k-1}.
\endaligned
$$
Similarly, for even $n=2k$, we have
$$
\aligned
\det[a_{i+j}]_{0\leq i,j\leq n}&=
\alpha\det[\alpha^2\beta^{i+j+1}C_{i+j+1}-\beta^{i+j+2}C_{i+j+2}]_{0 \leq i,j \leq k-1} \cdot \det[\beta^{i+j+1}C_{i+j+1}]_{0 \leq i,j \leq k}\\
&=\alpha\beta^{\frac{n^2+2n+2}2} \det[\alpha^2C_{i+j+1}-\beta C_{i+j+2}]_{0 \leq i,j \leq k-1}.
\endaligned
$$
Now expression \eqref{eq:h_n**} (i.e. Theorem \ref{thm:h_n**}) is directly obtained using Theorem \ref{thm:CatLin}.


\section{Hankel-like determinants based on Catalan numbers}

Before we proceed to the reevaluation of the Hankel transform of the sequence $\seqn{u_n}$, we need to prove two lemmas concerning determinants which are generalizations of the Hankel determinants. Our main tool is the following theorem proven by Krattenthaller in \cite{krattCat} (Theorem 3):

\begin{thm} {\bf \cite{krattCat}} Let $n$ be a positive integer and $\alpha_0,\alpha_1,\ldots,\alpha_{n-1}$ non-negative integers. Then
\bb
\det[C_{\alpha_i+j}]_{0\leq i,j\leq k-1}=\prod_{0\leq i<j\leq k-1} (\alpha_j-\alpha_i) \prod_{i=0}^{k-1} \frac{(i+k)!(2\alpha_i)!}{(2i)!\alpha_i!(\alpha_i+k)!}.
\label{eq:Ckratt}
\ee
\label{thm:kratt}
\end{thm}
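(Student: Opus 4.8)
The plan is to reduce the determinant to a Vandermonde determinant by extracting a suitable factor from each row, and then to pin down the resulting multiplicative constant. First I would write each Catalan number explicitly as $C_m = (2m)!/(m!\,(m+1)!)$, so that the $(i,j)$ entry becomes $C_{\alpha_i+j} = (2\alpha_i+2j)!/\big((\alpha_i+j)!\,(\alpha_i+j+1)!\big)$. From row $i$ I would pull out the common factor $R_i = (2\alpha_i)!/\big(\alpha_i!\,(\alpha_i+k)!\big)$, which is precisely the $\alpha_i$-dependent part appearing in the claimed formula. A short factorial manipulation (splitting $(2\alpha_i+2j)!/(2\alpha_i)!$ into its even and odd factors) then shows that the remaining entry equals
$$ M_{ij} = \frac{C_{\alpha_i+j}}{R_i} = 2^j\prod_{s=1}^{j}(2\alpha_i+2s-1)\cdot\prod_{t=j+2}^{k}(\alpha_i+t), $$
a polynomial in $\alpha_i$ consisting of $j$ linear factors from one family and $k-1-j$ from another, hence of degree exactly $k-1$ in every column $j$.

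Second, I would argue that $\det[M_{ij}]_{0\le i,j\le k-1}$ equals a constant times the Vandermonde product $\prod_{0\le i<j\le k-1}(\alpha_j-\alpha_i)$. Since each $M_{ij}$ depends only on $\alpha_i$ and has the same functional form in every row, setting $\alpha_i=\alpha_{i'}$ makes rows $i$ and $i'$ coincide; thus the determinant vanishes on each hyperplane $\alpha_i=\alpha_{i'}$ and is therefore divisible by the full Vandermonde. Comparing degrees finishes the shape: the determinant has degree at most $k-1$ in each variable $\alpha_i$ (only row $i$ involves $\alpha_i$, and its entries have degree $k-1$), while the Vandermonde already has degree $k-1$ in each $\alpha_i$, so the quotient has degree $0$ in every variable, i.e. is a constant $c_k$ depending only on $k$. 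Consequently
$$ \det[C_{\alpha_i+j}]_{0\le i,j\le k-1} = c_k\,\Big(\prod_{i=0}^{k-1}\frac{(2\alpha_i)!}{\alpha_i!\,(\alpha_i+k)!}\Big)\prod_{0\le i<j\le k-1}(\alpha_j-\alpha_i). $$

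Third, I would determine $c_k$. The cleanest route is to specialize the (polynomial) identity to $\alpha_i=i$ for $i=0,\dots,k-1$: the left-hand side becomes the ordinary Hankel determinant $\det[C_{i+j}]_{0\le i,j\le k-1}$, which is classically equal to $1$ (a value already recalled in the excerpt), while $\prod_{i<j}(j-i)=\prod_{i=0}^{k-1} i!$. Solving the resulting scalar equation and simplifying the factorials gives $c_k = \prod_{i=0}^{k-1}(i+k)!/(2i)!$, which converts the displayed identity into exactly \eqref{eq:Ckratt}. Alternatively, $c_k$ can be computed without invoking the Catalan Hankel value by specializing $\alpha_i=-(i+1)$, which kills all entries with $j<i$ and renders $[M_{ij}]$ upper triangular with nonzero diagonal, so that $c_k$ is read off as a ratio of diagonal entries to the (specialized) Vandermonde.

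The main obstacle is the honest determination of $c_k$: the antisymmetry-and-degree argument delivers the Vandermonde shape essentially for free, but extracting the precise factorial product requires either a correct reduction of the two-family product determinant (one must verify the triangularizing specialization leaves a nonvanishing diagonal) or careful bookkeeping of the factorials when specializing to $\alpha_i=i$. I expect that last factorial simplification to be the only place where a computational slip is likely.
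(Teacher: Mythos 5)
The paper does not prove this statement at all: it is quoted verbatim from Krattenthaler \cite{krattCat} (Theorem 3 there) and used as a black box, so there is no in-paper proof to compare against. Your argument is a correct, self-contained proof by the standard ``identification of factors'' method, which is in fact close in spirit to how such determinants are evaluated in Krattenthaler's own work. I checked the one computational step that carries all the weight: with $R_i=(2\alpha_i)!/\bigl(\alpha_i!\,(\alpha_i+k)!\bigr)$ one indeed gets
$C_{\alpha_i+j}/R_i=2^j\prod_{s=1}^{j}(2\alpha_i+2s-1)\cdot\prod_{t=j+2}^{k}(\alpha_i+t)$,
a polynomial of degree exactly $j+(k-1-j)=k-1$ in $\alpha_i$, so the antisymmetry-plus-degree-count argument legitimately forces $\det[M_{ij}]=c_k\prod_{i<j}(\alpha_j-\alpha_i)$ with $c_k$ a constant. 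The calibration at $\alpha_i=i$ also checks out: $1=c_k\prod_{i=0}^{k-1}(2i)!/(i+k)!$, giving $c_k=\prod_{i=0}^{k-1}(i+k)!/(2i)!$ as required. Two remarks. First, pinning down $c_k$ via $\det[C_{i+j}]_{0\le i,j\le k-1}=1$ makes the theorem depend on that classical evaluation (which is the $\alpha_i=i$ special case of the very identity being proved); this is harmless in the context of this paper, which independently cites that fact from \cite{kratt}, and your alternative specialization $\alpha_i=-(i+1)$ does remove the dependence --- I verified it triangularizes $[M_{ij}]$ with nonzero diagonal (e.g.\ for $k=2$ it gives $c_2=6=\tfrac{2!}{0!}\cdot\tfrac{3!}{2!}$). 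Second, a cosmetic point inherited from the paper: the statement says ``let $n$ be a positive integer'' but the formula is indexed by $k$; your proof correctly works with $k$ throughout.
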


We use the notation $\chi(P)=1$ if $P$ is true and $\chi(P)=0$ otherwise. Also, we assume that the sequence $c=\seqn{c_n}$ is defined by $c_n=\beta^n C_n$.

\begin{lem} For every $l=0,1,\ldots,k-1$ we have
$$
\det[c_{i+j+\chi(j\geq l)+1}]_{0\leq i,j\leq k-1}=\beta^{k^2+k-l} \binom{l+k+1}{2l+1}.
$$
\label{lem:minori}
\end{lem}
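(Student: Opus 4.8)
The plan is to peel the powers of $\beta$ off the Catalan numbers, reduce the claim to a pure Catalan determinant, and then recognise that determinant as an instance of Krattenthaler's formula (Theorem \ref{thm:kratt}). First I would write the $(i,j)$ entry as $c_{i+j+\chi(j\geq l)+1}=\beta^{i+j+\chi(j\geq l)+1}C_{i+j+\chi(j\geq l)+1}$ and factor $\beta^i$ out of row $i$ and $\beta^{\,j+\chi(j\geq l)+1}$ out of column $j$. Since the column factor does not depend on $i$, this is legitimate, and the total exponent extracted is
$$
\sum_{i=0}^{k-1} i+\sum_{j=0}^{k-1}\big(j+\chi(j\geq l)+1\big)=2\binom{k}{2}+(k-l)+k=k^2+k-l,
$$
which already supplies the prefactor $\beta^{k^2+k-l}$. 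It then remains to prove the purely Catalan identity $\det[C_{i+j+\chi(j\geq l)+1}]_{0\leq i,j\leq k-1}=\binom{l+k+1}{2l+1}$.

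Next I would bring this determinant into the exact shape required by Theorem \ref{thm:kratt}. Writing $\gamma_j=j+\chi(j\geq l)+1$, the column shifts $\gamma_0,\dots,\gamma_{k-1}$ run through the values $1,2,\dots,l,l+2,\dots,k+1$, that is, through all of $\{1,2,\dots,k+1\}$ except $l+1$. Because the entry is $C_{i+\gamma_j}$ and a determinant equals that of its transpose,
$$
\det[C_{i+\gamma_j}]_{0\leq i,j\leq k-1}=\det[C_{\gamma_i+j}]_{0\leq i,j\leq k-1},
$$
which is precisely $\det[C_{\alpha_i+j}]$ with the non-negative integers $\alpha_i=i+\chi(i\geq l)+1$ (so $\alpha_i=i+1$ for $i<l$ and $\alpha_i=i+2$ for $i\geq l$). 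These $\alpha_i$ are strictly increasing, so Theorem \ref{thm:kratt} applies verbatim.

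Finally I would substitute these $\alpha_i$ into the right-hand side of \eqref{eq:Ckratt} and simplify. In the Vandermonde factor, $\alpha_j-\alpha_i$ equals $j-i$ when $i,j$ lie on the same side of $l$ and equals $j-i+1$ when $i<l\leq j$, so $\prod_{0\le i<j\le k-1}(\alpha_j-\alpha_i)$ factors into within-block contributions (each $j-i$) and straddling contributions (each $j-i+1$), all of which I would rewrite through factorials. In the second factor $\prod_{i}\frac{(i+k)!(2\alpha_i)!}{(2i)!\,\alpha_i!\,(\alpha_i+k)!}$ the two regimes $i<l$ and $i\geq l$ should each telescope once the factorials are expanded. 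I expect this simplification to be the main obstacle: collapsing the combined product down to the single binomial coefficient $\binom{l+k+1}{2l+1}$ is the only genuinely computational part of the argument and demands careful bookkeeping of the cancellations. As a consistency check, the cases $k=1$ and $k=2$ (for both $l=0$ and $l=1$) already reproduce $\binom{2}{1}=2$, $\binom{3}{1}=3$, and $\binom{4}{3}=4$, in agreement with the claimed formula.
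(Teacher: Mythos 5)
Your proposal is correct and follows essentially the same route as the paper: factor the powers of $\beta$ out of rows and columns to get the prefactor $\beta^{k^2+k-l}$, transpose so the shifts sit in the row index as $\alpha_i=i+\chi(i\geq l)+1$, and apply Theorem \ref{thm:kratt}. The factorial bookkeeping you defer is exactly the part the paper also only sketches (it records the intermediate products $P_1=\binom{k}{l}\prod_{j=0}^{k-1}j!$ and $P_2$ via the same case split you describe), and your small-case checks confirm the target value $\binom{l+k+1}{2l+1}$.
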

\begin{proof} Denote by $\alpha_i=i+\chi(i\geq l)+1$. We need to evaluate the determinant
$$
\det[c_{\alpha_i+j}]_{0\leq i,j\leq k-1}=\det[\beta^{\alpha_i+j}C_{\alpha_i+j}]_{0\leq i,j\leq k-1}.
$$
By taking $\beta^{\alpha_i}$ from row $i$ ($i=0,1,\ldots,k-1$) and then $\beta^j$ from column $j$ ($j=0,1,\ldots,k-1$) we get the total power of $\beta$ equal to
$$
\sum_{i=0}^{k-1} \alpha_i + \sum_{j=0}^{k-1}j = k^2+k-l.
$$
Hence, our determinant reduces to
\bb
\det[c_{\alpha_i+j}]_{0\leq i,j\leq k-1}=\beta^{k^2+k-l}\det[C_{\alpha_i+j}]_{0\leq i,j\leq k-1}.
\label{eq:lemminori1}
\ee
According to the Theorem \ref{thm:kratt}, we need to compute the following products
$$
 P_1=\prod_{0\leq i<j\leq k-1} (\alpha_j-\alpha_i), \quad P_2=\prod_{i=0}^{k-1} \frac{(i+k)!(2\alpha_i)!}{(2i)!\alpha_i!(\alpha_i+k)!}.
$$
By direct evaluation it can be shown that
\bb
P_1=\binom kl \prod_{j=0}^{k-1} j!, \quad P_2=\frac{k!(l+1)!}{\displaystyle\prod_{j=0}^{k+1} j!} \cdot \frac{(2k+2)(l+k+1)!}{(2l+2)!}.
\label{eq:lemminori2}
\ee
The first product was evaluated considering three different cases ($i<j<l$, $l\leq i<j$, $i\leq l<j$), while for the second we only needed to distinguish between $i<l$ and $i\geq l$. Now using \eqref{eq:lemminori2} and Theorem \ref{thm:kratt} we obtain
$$
\det[C_{\alpha_i+j}]_{0\leq i,j\leq k-1}=P_1 \cdot P_2=\binom{l+k+1}{2l+1}.
$$
Now the statement of the lemma follows directly from the previous equation and \eqref{eq:lemminori1}.
\end{proof}

\begin{lem} For every $l=0,1,\ldots,k-1$ we have
$$
\det[c_{i+j+\chi(j\geq l)}]_{0\leq i,j\leq k-1}=\beta^{k^2-l} \binom{l+k}{2l}.
$$
\label{lem:minori1}
\end{lem}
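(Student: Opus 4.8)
The plan is to mirror the proof of Lemma \ref{lem:minori} almost verbatim, the only change being the removal of the $+1$ shift in the row indices. First I would set $\alpha_i = i + \chi(i\geq l)$ for $i = 0,1,\ldots,k-1$. Since a determinant is unchanged under transposition, the column-dependent shift $\chi(j\geq l)$ in $\det[c_{i+j+\chi(j\geq l)}]_{0\leq i,j\leq k-1}$ may be turned into a row-dependent shift, so that the determinant equals $\det[c_{\alpha_i+j}]_{0\leq i,j\leq k-1} = \det[\beta^{\alpha_i+j}C_{\alpha_i+j}]_{0\leq i,j\leq k-1}$. Pulling $\beta^{\alpha_i}$ out of row $i$ and $\beta^{j}$ out of column $j$ leaves the total power $\sum_{i=0}^{k-1}\alpha_i + \sum_{j=0}^{k-1} j = (\binom{k}{2} + (k-l)) + \binom{k}{2} = k^2 - l$, reducing the claim to showing $\det[C_{\alpha_i+j}]_{0\leq i,j\leq k-1} = \binom{l+k}{2l}$.

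For the remaining Catalan determinant I would apply Krattenthaler's Theorem \ref{thm:kratt}. The crucial observation is that $\{\alpha_0,\ldots,\alpha_{k-1}\} = \{0,1,\ldots,k\}\setminus\{l\}$, i.e.\ the first $k+1$ integers with $l$ deleted. Hence the Vandermonde factor $P_1 = \prod_{0\leq i<j\leq k-1}(\alpha_j-\alpha_i)$ is obtained from the full Vandermonde product $\prod_{0\leq a<b\leq k}(b-a) = \prod_{j=0}^{k} j!$ by dividing out the factors that involve the deleted value $l$, namely $l!\,(k-l)!$; this gives $P_1 = \binom{k}{l}\prod_{j=0}^{k-1} j!$, exactly as in Lemma \ref{lem:minori} (the differences are shift-invariant, so $P_1$ is unchanged by dropping the $+1$). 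For the second factor $P_2 = \prod_{i=0}^{k-1}\frac{(i+k)!(2\alpha_i)!}{(2i)!\,\alpha_i!\,(\alpha_i+k)!}$ I would separate the $l$-independent part $\prod_{i=0}^{k-1}\frac{(i+k)!}{(2i)!}$ from the $\alpha_i$-dependent part, and rewrite the latter as the full product over $m = 0,\ldots,k$ divided by the $m=l$ term, which reintroduces the factor $\frac{l!\,(l+k)!}{(2l)!}$. In practice this is handled exactly as in the previous lemma by distinguishing $i<l$ (where $\alpha_i=i$) from $i\geq l$ (where $\alpha_i = i+1$).

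The main obstacle is the factorial bookkeeping in the final step: one must check that multiplying $P_1$ and $P_2$ causes every $l$-independent factorial product to cancel and leaves precisely $\binom{l+k}{2l} = \frac{(l+k)!}{(2l)!\,(k-l)!}$. Concretely, after collecting terms the $l$-dependent part is $\binom{k}{l}\frac{l!\,(l+k)!}{(2l)!}$, and one is left to verify the $l$-independent identity $\prod_{j=0}^{k-1} j!\cdot\prod_{i=0}^{k-1}\frac{(i+k)!}{(2i)!}\cdot\prod_{m=0}^{k}\frac{(2m)!}{m!\,(m+k)!} = \frac{1}{k!}$, after which the product collapses to the asserted binomial. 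I would double check the boundary case $l=0$ (empty products) and confirm the formula on small values such as $(k,l)=(2,1)$, where the determinant equals $3 = \binom{3}{2}$, before writing out the computation in full.
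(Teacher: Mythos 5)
Your proposal is correct and follows essentially the same route as the paper: the same substitution $\alpha_i=i+\chi(i\geq l)$, the same extraction of $\beta^{k^2-l}$, and the same application of Krattenthaler's Theorem \ref{thm:kratt} with $P_1=\binom{k}{l}\prod_{j=0}^{k-1}j!$ unchanged from Lemma \ref{lem:minori}. Your factorial bookkeeping checks out (indeed $P_2=\frac{l!}{\prod_{j=0}^{k}j!}\cdot\frac{(l+k)!}{(2l)!}$, so the $(l+1)!$ in the paper's displayed $P_2$ is a typo), and $P_1P_2$ collapses to $\binom{l+k}{2l}$ as claimed.
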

\begin{proof} We use again the same procedure. Now we denote $\alpha_i=i+\chi(i\geq l)$ and obtain
$$
\det[c_{\alpha_i+j}]_{0\leq i,j\leq k-1}=\det[\beta^{\alpha_i+j}C_{\alpha_i+j}]_{0\leq i,j\leq k-1}=\beta^{k^2-l} \det[C_{\alpha_i+j}]_{0\leq i,j\leq k-1}.
$$
The product $P_1$ has the same value as in the previous case, while $P_2$ is equal to
$$
P_2=\frac{(l+1)!}{\displaystyle\prod_{j=0}^k j!} \cdot \frac{(l+k)!}{(2l)!}
$$
Again, by replacing $\det[C_{\alpha_i+j}]_{0\leq i,j\leq k-1}=P_1\cdot P_2$ (Theorem \ref{thm:kratt}) we obtain the statement of the lemma.
\end{proof}

\section{The sequence $u_n$}

In the section \ref{sect:dnun**} we proved that $u^{**}=\Bf(a;\alpha)$, where $a$ is the $\alpha$-aerating transform of the sequence $c_n=\beta^nC_n$ ($a=\A(c;\alpha)$), i.e. (equation \eqref{eq:dna_n}):
$$
a_n=\begin{cases}
\alpha \beta^k C_k,& n=2k \\
\beta^k C_k, & n=2k-1
\end{cases}.
$$
According to Lemma \ref{lem:binmatrix}, we have $\HH_{u^{**}}=\BB^\alpha \HH_a (\BB^\alpha)^T$. We have already proved that $u^*=\Bf\(p;\alpha\)$ (section \ref{sect:dnun*}) where $p=\A(c)$, i.e.
$$
p_n=\begin{cases}
\beta^k C_k, & n=2k \\
0, & n=2k-1
\end{cases}.
$$
This can be written in matrix notation as $u^*=\BB^\alpha p$. Now we have that the following matrix equality holds:
\bb
\bmatrix 1 & \\ & \BB^\alpha \endbmatrix
\bmatrix 0 & p^T \\ p & \HH_a \endbmatrix
\bmatrix 1 & \\ & (\BB^\alpha)^T \endbmatrix=
\bmatrix 0 & p^T (\BB^\alpha)^T \\ \BB^\alpha p & \BB^\alpha \HH_a (\BB^\alpha)^T \endbmatrix=
\bmatrix 0 & (u^*)^T \\ u^* & \HH_{u^{**}} \endbmatrix = \HH_u
\ee
Hence, the determinant of the $(n+1)\times (n+1)$ principal minor of $\HH_u$, formed by the rows and columns with indices $1,2,\ldots,n+1$, is equal to the same minor of the matrix
$$
\HH'=\bmatrix 0 & p^T \\ p & \HH_a \endbmatrix.
$$
That minor is exactly $h_n$, i.e. $n$-th member of the Hankel transform $h=\H(u)$. In other words, we have to compute
$$
h_n=\det [\HH_u]_{(n+1)\times(n+1)} = \det \bmatrix 0 & p^T \\ p & \HH_a \endbmatrix_{(n+1)\times(n+1)} = \det
\bmatrix
0 & c_0 & 0 & c_1 & 0 & \cdots \\
c_0 & \alpha c_0 & c_1 & \alpha c_1 & c_2 & \\
0 & c_1 & \alpha c_1 & c_2 & \alpha c_2 & \\
c_1 & \alpha c_1 & c_2 & \alpha c_2 & c_3 & \\
0 & c_2 & \alpha  c_2 & c_3 & \alpha  c_3 & \\
\vdots & & & & & \ddots
\endbmatrix_{(n+1)\times(n+1)}.
$$
We distinguish two cases depending on the parity of $n$.

\medskip

\noindent {\bf Case 1.} $n=2k-1$ is odd. Multiplying the column $2j$ by $\alpha$ and subtracting from column $2j+1$ ($j=0,1,\ldots,k-1$) yields
$$
h_n=\det
\bmatrix
0 & c_0 & 0 & c_1 & 0 & \cdots & \\
c_0 & 0 & c_1 & 0 & c_2 &\\
0 & c_1 & \alpha c_1 & c_2-\alpha^2 c_1 & \alpha c_2   \\
c_1 & 0 & c_2 & 0 & c_3  \\
0 & c_2 & \alpha c_2 & c_3-\alpha^2 c_2 & \alpha c_3 \\\
\vdots & & & & & \ddots\\
\endbmatrix_{(n+1)\times(n+1)}
$$
By exchanging rows and columns in the previous determinant we obtain
\bb
h_n=(-1)^k\det
\bmatrix
{\bf A} & * \\ & {\bf B}
\endbmatrix=(-1)^k\det {\bf A} \cdot \det {\bf B}
\label{eq:Hfact}
\ee
where the matrices ${\bf A}$ and ${\bf B}$ are equal to
$$
{\bf A}=\det \bmatrix
c_0 & c_1 & \cdots & c_{k-1}\\
c_1 & c_2-\alpha^2 c_1 & & c_{k-2}-\alpha^2 c_{k-1}\\
\vdots & & \ddots\\
c_{k-1} & c_{k-2}-\alpha^2 c_{k-1} & & c_{2k-2}-\alpha^2 c_{2k-3}
\endbmatrix, \quad
{\bf B}=[c_{i+j}]_{0\leq i,j \leq k-1}.
$$
Since $c_n=\beta^nC_n$, using Proposition \ref{prop:mul} we obtain
\bb
\det {\bf B}=\det[\beta^{i+j}C_{i+j}]_{0\leq i,j \leq k-1}=\beta^{k(k-1)}\det [C_{i+j}]_{0\leq i,j\leq k-1}=\beta^{k(k-1)}.
\label{eq:detB}
\ee
By adding column $j$ to column $j+1$ of the matrix ${\bf A}$ ($j=0,1,\ldots,k-2$) we obtain the following determinant
$$
\det {\bf A}=\det
\bmatrix
c_0 & \alpha^2 c_0+c_1 & \alpha^4 c_0+\alpha^2 c_1+c_2 & \cdots\\
c_1 & c_2 & c_3 \\
c_2 & c_3 & c_4 \\
\vdots & & & \ddots
\endbmatrix_{k \times k}.
$$
Expanding over the first row yields
\bb
\det {\bf A}=\sum_{l=0}^{k-1} (-1)^l \(\sum_{h=0}^l \alpha^{2h} c_{l-h} \) \det[c_{i+j+\chi(j\geq l)+1}]_{0\leq i,j\leq k-2}.
\label{eq:Aexpand}
\ee
Using Lemma \ref{lem:minori} together with the expressions \eqref{eq:Hfact}, \eqref{eq:detB} and \eqref{eq:Aexpand}, we obtain
\bb
h_{2k-1}=\beta^{(k-1)(2k-1)} \sum_{l=0}^{k-1} (-1)^{k+l} \(\sum_{h=0}^l \alpha^{2h} \beta^{k-1-h} C_{l-h} \) \binom{l+k}{2l+1}.
\label{eq:hodd}
\ee

\medskip

\noindent {\bf Case 2.} $n=2k$ is even. Subtracting column $2j-1$ from column $2j$ ($j=0,1,\ldots,k-1$) we obtain
$$
h_n=\det
\bmatrix
0 & c_0 & -\alpha c_0 & c_1 & 0 & \cdots & \\
c_0 & \alpha c_0 & c_1-\alpha^2c_0 & \alpha c_1 & c_2-\alpha^2c_1 &\\
0 & c_1 & 0 & c_2 & 0   \\
c_1 & \alpha c_1 & c_2-\alpha^2c_1 & \alpha c_2 & c_3-\alpha^2c_2  \\
0 & c_2 & 0 & c_3 & 0 \\\
\vdots & & & & & \ddots\\
\endbmatrix_{(n+1)\times(n+1)}
$$
By exchanging rows and columns in the previous determinant we obtain
\bb
h_n=(-1)^{k}\det
\bmatrix
{\bf A} & * \\ & {\bf B}
\endbmatrix=(-1)^k\det {\bf A} \cdot \det {\bf B}
\label{eq:Hfact1}
\ee
where the matrices ${\bf A}$ and ${\bf B}$ are equal to
$$
{\bf A}=\det \bmatrix
0  & -\alpha c_0 & \cdots & -\alpha  c_{k-1}\\
c_0 & c_1-\alpha^2 c_0 & & c_{k}-\alpha^2 c_{k-1}\\
\vdots & & \ddots\\
c_{k-1} & c_{k}-\alpha^2 c_{k-1} & & c_{2k-1}-\alpha^2 c_{2k-2}
\endbmatrix, \quad
{\bf B}=[c_{i+j+1}]_{0\leq i,j \leq k-1}.
$$
Since $c_n=\beta^nC_n$, using Proposition \ref{prop:mul} we obtain
\bb
\det {\bf B}=\det[\beta^{i+j+1}C_{i+j+1}]_{0\leq i,j \leq k-1}=\beta^{k^2}\det [C_{i+j}]_{0\leq i,j\leq k-1}=\beta^{k^2}.
\label{eq:detB1}
\ee
Again, by multiplying column $j$ by $\alpha$ and adding to column $j+1$ ($j=0,1,\ldots,k-1$), we obtain the following determinant
$$
\det {\bf A}=\det
\bmatrix
0 & -\alpha c_0 & \alpha^3 c_0-\alpha c_1 & -\alpha^5 c_0-\alpha^3 c_1-\alpha c_2 & \cdots\\
c_0 & c_1 & c_2 & c_3 \\
c_1 & c_2 & c_3 & c_4 \\
c_2 & c_3 & c_4 & c_5 \\
\vdots & & & & \ddots
\endbmatrix_{(k+1) \times (k+1)}.
$$
which can be expanded by the first row in the following way
\bb
\det A=\sum_{l=1}^{k} (-1)^l \(\sum_{h=0}^{l-1} \alpha^{2h+1} c_{l-1-h} \) \det[c_{i+j+\chi(j\geq l)}]_{0\leq i,j\leq k-1}.
\label{eq:Aexpand1}
\ee
Now using Lemma \ref{lem:minori1} and expressions \eqref{eq:Hfact1}, \eqref{eq:detB1} and \eqref{eq:Aexpand1} we obtain
\bb
h_{2k}=\beta^{k(2k-1)} \sum_{l=1}^{k} (-1)^{k+l-1} \(\sum_{h=0}^{l-1} \alpha^{2h+1} \beta^{k-1-h} C_{l-1-h} \) \binom{l+k}{2l}.
\label{eq:heven}
\ee

\noindent{\bf Proof of Theorem \ref{thm:h_n}.} We can rewrite expressions \eqref{eq:hodd} and \eqref{eq:heven} as follows (we exchanged the order of summation):
\bb
\aligned
h_{2k-1}&=\beta^{(k-1)(2k-1)} \sum_{h=0}^{k-1} \alpha^{2h} \beta^{k-1-h} \sum_{l=h}^{k-1} (-1)^{k+l} C_{l-h} \binom{l+k}{2l+1}, \\
h_{2k}&=\beta^{k(2k-1)} \sum_{h=0}^{k-1} \alpha^{2h+1} \beta^{k-1-h} \sum_{l=h+1}^{k} (-1)^{k+l-1} C_{l-1-h} \binom{l+k}{2l}.
\label{eq:hoddeven}
\endaligned
\ee
Now let $z_n=\beta^{-\binom n2}h_n$ and in the second equation of \eqref{eq:hoddeven} decrease the bounds for $l$ by 1. Expressions for $z_{2k}$ and $z_{2k-1}$ are
\bb
\aligned
z_{2k-1}&=\sum_{h=0}^{k-1} \alpha^{2h} \beta^{k-1-h} \sum_{l=h}^{k-1} (-1)^{k+l} C_{l-h} \binom{l+k}{2l+1}, \\
z_{2k}&=  \sum_{h=0}^{k-1} \alpha^{2h+1} \beta^{k-1-h} \sum_{l=h}^{k-1} (-1)^{k+l} C_{l-h} \binom{l+k+1}{2l+2}.
\endaligned
\ee
By direct verification we conclude that $z_n$ satisfies the three-term linear difference equation
$$
z_{n+2}-\alpha z_{n+1}+\beta z_n=0
$$
for all $n\in \N_0$, which directly implies the expression \eqref{formzahn}:
$$
h_n=\beta^{\binom n2}z_n=\frac{\beta^{\binom{n}2}}{2^n \sqrt{\alpha^2-4\beta}} \[\(\alpha-\sqrt{\alpha^2-4\beta}\)^n-\(\alpha+\sqrt{\alpha^2-4\beta}\)^n \].
$$
This completes the proof of Theorem \ref{thm:h_n}.

\section*{Acknowledgements}

Marko D. Petkovi\' c gratefully acknowledges the support of the research project 174013 of the Serbian Ministry of Education and Science. Authors wish to thank anonymous reviewer of our previous paper \cite{part1} whose comments were the motivation for this work. Also the authors wish to thank Professor Predrag M. Rajkovi\'c for useful discussions on this topic.



\end{document}